\renewcommand{\mod}{\operatorname{mod}\nolimits}
\newcommand{\add}{\operatorname{add}\nolimits}
\newcommand{\repdim}{\operatorname{repdim}\nolimits}
\newcommand{\Hom}{\operatorname{Hom}\nolimits}
\newcommand{\End}{\operatorname{End}\nolimits}
\renewcommand{\Im}{\operatorname{Im}\nolimits}
\newcommand{\Ker}{\operatorname{Ker}\nolimits}
\newcommand{\rrad}{\mathfrak{r}}
\newcommand{\rad}{\operatorname{rad}\nolimits}
\newcommand{\Soc}{\operatorname{Soc}\nolimits}
\newcommand{\gldim}{\operatorname{gldim}\nolimits}
\newcommand{\DTr}{\operatorname{DTr}\nolimits}
\newcommand{\Ext}{\operatorname{Ext}\nolimits}
\newcommand{\op}{{\operatorname{op}\nolimits}}
\newcommand{\pd}{{\operatorname{pd}\nolimits}}
\newcommand{\comp}{\operatorname{\scriptstyle\circ}}
\renewcommand{\L}{\Lambda}
\renewcommand{\P}{{\mathcal P}}
\newcommand{\extto}{\xrightarrow}
\newtheorem{lem}{Lemma}[section]
\newtheorem{prop}[lem]{Proposition}
\newtheorem{cor}[lem]{Corollary}
\newtheorem{thm}[lem]{Theorem}
\theoremstyle{definition}
\newtheorem{example}[lem]{Example}
\begin{document}

\title{Mutation of Auslander generators}
\author[Lada]{Magdalini Lada}
\thanks{The author was partially supported by NFR Storforsk grand No. 167130.}
\address{Magdalini Lada\\Institutt for matematiske fag\\
NTNU\\ N--7491 Trondheim\\ Norway} \email{magdalin@math.ntnu.no}

\date{\today}
\maketitle

\begin{abstract}
Let $\L$ be an artin algebra with representation dimension equal to
three and $M$ an Auslander generator of $\L$.  We show how, under
certain assumptions, we can mutate $M$ to get a new Auslander
generator whose endomorphism ring is derived equivalent to the
endomorphism ring of $M$. We apply our results to selfinjective
algebras with radical cube zero of infinite representation type, where
we construct an infinite set of Auslander generators. 
\end{abstract}

\section*{Introduction}
The notion of the representation dimension for an artin algebra $\L$, was introduced by Auslander in ~\cite{A}. The purpose was to give a measure of how far  $\L$ is from being representation finite (that is, having a finite number of nonisomorphic indecomposable finitely generated modules). It is defined as the smallest number, that can be realized as the global dimension of the endomorphism ring of a finitely generated $\L$-module, which contains all the indecomposable projective and indecomposable injective $\L$-modules as direct summands. Such a module, whose endomorphism ring has global dimension equal to the representation dimension of $\L$, is called an \emph{Auslander generator} of $\L$. 

Although a lot of research focuses on determining bounds and computing the representation dimension of special classes of artin algebras, not much work has been done on studying the class of Auslander generators of an algebra. In this paper, in an effort to understand better the structure - if any - of the class of Auslander generators, we show how we can construct new Auslander generators from given ones, under certain assumptions, for artin algebras with representation dimension equal to three. This is done by replacing an indecomposable direct summand of an Auslander generator by another indecomposable $\L$-module. We call this process mutation since it resembles the mutation of objects in  ~\cite{BMRRT}, ~\cite{Iy}, ~\cite{GLS}, ~\cite{IY}.

For the presentation of our results, we have found it convenient to use the language of \emph{relative homological algebra}, as this was developed by Auslander and Solberg in ~\cite{AS}. In Section 1, we give definitions of all the notions, from relative homological algebra, that appear in our work and we refer to ~\cite{AS} for details. A short overview can also be found in ~\cite{L1}. In Section 2, we fix the setting and make some assumptions under which we are able to mutate a given Auslander generator in order to get a new one. In Section 3, we show that these assumptions are satisfied for infinite type selfinjective algebras with radical cube zero, and a specific class of Auslander generators of these algebras. Finally, in Section 4, we show via an example how we can construct by mutation an infinite set of Auslander generators of an infinite type selfinjective algebra with radical cube zero, starting from a \emph{canonical} one. It is proven in ~\cite{L2} that for the exterior algebra in two variables, this process gives a complete set of \emph{minimal} Auslander generators. 

\section{Preliminaries}

The purpose of this section is to give, in a compact form, some necessary background for understanding the results  in this paper. 

We begin by fixing some notation that we use in the rest of the paper. Let $\L$ be an artin algebra. We denote by $\mod\L$ the category of finitely generated left $\L$-modules. By a $\L$-module we always mean a module in $\mod\L$. For a $\L$-module $M$, we denote by $\add M$ the full subcategory of $\mod\L$ consisting of all direct summands of a direct sum of copies of $M$. A \emph{generator-cogenerator} for $\mod \L$, is a $\L$-module $M$, such that all the indecomposable projective and the indecomposable injective $\L$-modules are in $\add M$. A formal definition of the representation dimension of $\L$, which we denote by $\repdim\L$ is given as follows:
\[\repdim\L=\inf \{\gldim\End_\L(M) \mid M \ \ \rm{generator-cogenerator \  for}  \ \mod\L\}.\]
Next, we recall some basic terminology and results from relative homological algebra and we refer to ~\cite{AS} for more details. Let $M$ be a generator-cogenerator for $\mod\L$. For any pair of $\L$-modules $A$ and $C$, we define the following set of short exact sequences
\begin{multline}F_M(C,A):=\{0\to A\to B \to C\to 0 \mid \Hom_\L(M,B)\to \notag\\\Hom_\L(M,C)\to 0 \mathrm{\
is \ exact}\}.
\end{multline}
A short exact sequence $ (\eta)\colon 0\to A \to B \to C \to 0$, in $\mod\L$, is called \emph{$F_M$-exact} if $(\eta)$ is in $F_M(C,A)$.
It is proven in ~\cite{AS}, that the above assignment defines an additive sub-bifunctor of the bifunctor $\Ext_\L^1(-,-)\colon (\mod\L)^\op \times \mod\L\to \mathrm{Ab}$, where $\mathrm{Ab}$ denotes the category of abelian groups. Moreover, using this fact, it is proven that the set of all $F_M$-exact sequences is closed under pushouts, pullbacks and direct sums of short exact sequences. In other words, any pushout or pullback of an $F_M$-exact sequence is again $F_M$-exact, and the direct sum of any two $F_M$-exact equences is again $F_M$-exact. It is obvious, by the definition of $F_M$-exact sequences, that the $\L$-modules in $\add M$ play the role of projective modules for the set of $F_M$-exact sequences. Hence it is natural to define as \emph{$F_M$-projective} modules, the modules in $\add M$. Dually, for any  $\L$-module $C$, there exists an $F_M$-exact sequence $0\to K \to P\to C\to 0$, with $P$ an $F_M$-projective $\L$-module. Note that the map $P\to C$ is nothing but a right $\add M$-approximation of $C$ which we know is an epimorphism, since $\add M$ contains all the indecomposable projective $\L$-modules. An \emph{$F_M$-projective resolution} of $C$ is an exact
sequence
\[ \cdots\to P_l\extto{f_l} P_{l-1}\to \cdots \to P_1\extto{f_1} P_0 \extto{f_0} C \to 0,\]
where the $\L$-module $P_i$ is $F_M$-projective  for all $i$, and each short exact sequence $0\to\Im f_{i+1}\to P_i \to \Im
f_i\to 0$ is $F_M$-exact. 

Similarly to the ordinary syzygy functor $\Omega_\L\colon \underline{\mod}\L\to \underline{\mod}\L$, we can define the \emph{$F_M$-syzygy} functor  $\Omega_{F_M}\colon \underline{\mod}_{F_M}\L\to\underline{\mod}_{F_M}\L$. Here, $\underline{\mod}_{F_M}\L$ denotes the factor category $\mod\L/\P_{F_M}$, where, for any $\L$-modules $A$ and $B$, the ideal $\P_{F_M}(A,B)\subseteq \Hom_\L(A,B)$ consists of the morphisms that factor through an $F_M$-projective module.  We denote by $\underline{\Hom}_{F_M}(A,B)$ the factor $\Hom_\L(A,B)/\P_{F_M}(A,B)$.
 
 The
\emph{$F_M$-projective dimension} of $C$, which we denote by $
\pd_{F_M} C $, is defined to be the
smallest $n$ such that there exists an $F_M$-projective resolution
\[0\to P_n\to\cdots\to P_1\to P_0 \to C\to 0.\] 
If such $n$ does not exist we set
$\pd_F C=\infty$. Then, the \emph{$F_M$-global dimension} of $\L$ is defined as:
\[\gldim_{F_M} \L =\sup\{\pd_{F_M} C  \mid C\in\mod\L\}.\]
Dually,  if $M$ is a generator-cogenerator for $\mod\L$, we can define a set of exact sequences, which we call \emph{$F^M$-exact}, such that  the $\L$-modules in $\add M$ play the role of injective modules.
The following useful result was proved in ~\cite{L1}.
\begin{prop}\label{gldim}
Let $M$ be a generator-cogenerator for $\mod\L$. Then, for any positive integer $l$, the
following are equivalent:
\begin{enumerate}
\item[(a)] $\gldim\End_{\L}(M)\leq l+2$,
\item[(b)] $\gldim_{F_{M}}\L\leq l$,
\item[(c)] $\gldim_{F^{M}}\L\leq l$.
\end{enumerate}
\end{prop}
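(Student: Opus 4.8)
The plan is to transport everything to the endomorphism ring $\Gamma:=\End_\L(M)$ via the functor $G:=\Hom_\L(M,-)\colon\mod\L\to\mod\Gamma$ and to compare $F_M$-projective resolutions with ordinary projective resolutions over $\Gamma$. Two standard inputs get things moving: $G$ is left exact and restricts to an equivalence between $\add M$ and the category of finitely generated projective $\Gamma$-modules; and a short exact sequence $0\to A\to B\to C\to 0$ in $\mod\L$ is $F_M$-exact exactly when $0\to GA\to GB\to GC\to 0$ is exact. Hence $G$ carries an $F_M$-projective resolution of a module $C$ to a projective resolution of $GC$ over $\Gamma$, sending the $i$-th $F_M$-syzygy $\Omega_{F_M}^i C$ to the ordinary syzygy $\Omega_\Gamma^i(GC)$. (Such an $F_M$-projective resolution always exists, as recalled above, so this makes sense for every $C$, infinite dimensions included.)

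The first lemma I would isolate is: for $X\in\mod\L$, the $\Gamma$-module $GX$ is projective if and only if $X\in\add M$. The nontrivial direction uses that $M$ is a generator, so that $\L\in\add M$: a minimal right $\add M$-approximation $\theta\colon M_X\to X$ is then surjective, and $G\theta\colon GM_X\to GX$ is a projective cover (right minimality of $\theta$ is preserved by the equivalence $G|_{\add M}$); if $GX$ is projective this cover must be an isomorphism, so $\Hom_\L(M,\Ker\theta)=0$, whence $\Ker\theta=0$ (again because $M$ is a generator) and $X\cong M_X\in\add M$. Combined with the syzygy bookkeeping above, the lemma upgrades the correspondence of resolutions to the numerical equality $\pd_{F_M}C=\pd_\Gamma(GC)$ for every $C\in\mod\L$, and therefore
\[\gldim_{F_M}\L=\sup\{\pd_\Gamma(GC)\mid C\in\mod\L\}.\]

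It then remains to relate this supremum to $\gldim\Gamma$, and the crux is the pair of inequalities $\gldim\Gamma\le\gldim_{F_M}\L+2$ and $\gldim_{F_M}\L\le\max(0,\gldim\Gamma-2)$. For the first, every finitely generated $\Gamma$-module $N$ has a projective presentation $P_1\to P_0\to N\to 0$; lifting $P_1\to P_0$ to a morphism $g\colon M_1\to M_0$ in $\add M$ through the equivalence and using left exactness of $G$ gives $\Omega_\Gamma^2 N=\Ker(P_1\to P_0)=G(\Ker g)$, a module in the image of $G$, so $\pd_\Gamma N\le 2+\gldim_{F_M}\L$. For the second — and this is where the \emph{cogenerator} hypothesis is used essentially — every module $GC$ is itself a second syzygy over $\Gamma$: since $M$ is a cogenerator one embeds $C$ into some $I_0\in\add M$ and the cokernel into some $I_1\in\add M$, so that $C=\Ker(I_0\to I_1)$ and hence $GC=\Ker(GI_0\to GI_1)=\Omega_\Gamma^2\bigl(\Coker(GI_0\to GI_1)\bigr)$; thus $\pd_\Gamma(GC)\le\max(0,\gldim\Gamma-2)$ for all $C$. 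Together these two inequalities yield, for any positive integer $l$, that $\gldim_{F_M}\L\le l$ if and only if $\gldim\Gamma\le l+2$, i.e. the equivalence of (a) and (b).

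Finally, (b)$\iff$(c) follows by duality. Writing $D\colon\mod\L\to\mod\L^{\op}$ for the usual duality, $DM$ is a generator-cogenerator for $\mod\L^{\op}$, the $F^M$-exact sequences correspond under $D$ precisely to the $F_{DM}$-exact sequences, and $\End_{\L^{\op}}(DM)\cong\End_\L(M)^{\op}$ has the same global dimension as $\End_\L(M)$; so applying the already-established equivalence (a)$\iff$(b) to the pair $(\L^{\op},DM)$ gives $\gldim_{F^M}\L\le l\iff\gldim\End_\L(M)\le l+2\iff\gldim_{F_M}\L\le l$. I expect the two genuinely structural steps to be where the work lies: reflecting projectivity of $GX$ back to membership in $\add M$ (the minimal-approximation/projective-cover argument in the first lemma), and exhibiting $\Hom_\L(M,C)$ as a second syzygy over $\Gamma$ — the latter being exactly what produces the shift by $2$, and the place where being a cogenerator, not just a generator, is indispensable.
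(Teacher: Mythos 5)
Your proposal is correct. Note that the paper itself gives no proof of Proposition \ref{gldim} --- it is quoted from \cite{L1} --- so there is nothing in-text to compare against; your argument is the classical Auslander-type one (transporting $F_M$-resolutions through $\Hom_\L(M,-)$, reflecting projectivity of $\Hom_\L(M,X)$ back to $X\in\add M$ via a minimal approximation, and using the cogenerator hypothesis to realize every $\Hom_\L(M,C)$ as a second syzygy, which produces the shift by $2$), and the duality $D$ handles (b)$\iff$(c); this is essentially the route taken in the cited reference, and all the delicate points (faithfulness of $\Hom_\L(M,-)$ for the minimality transfer, left exactness for identifying kernels, $\gldim$ of an artin algebra equalling that of its opposite) are correctly accounted for.
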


In view of Proposition \ref{gldim} we can give the following equivalent definition of the representation dimension of an artin agebra $\L$:
\[\repdim\L=\inf \{\gldim_{F_M}\L\mid M \ \ \rm{generator-cogenerator \  for}  \ \mod\L\}+2.\]
If $M$ is a basic Auslander generator of a selfinjective artin algebra $\L$, we write $M=\L\oplus M_\P$, where $M_\P$ is the direct sum of the non-projective direct summands of $M$. We have the following consequence of Proposition \ref{gldim}.

\begin{cor}\label{DTr}
Let $\L$ be a selfinjective artin algebra. Then the $\L$-module $\L\oplus M_\P$ is an Auslander generator if and only if the $\L$-module $\L\oplus \DTr M_\P$ is an Auslander generator.
\end{cor}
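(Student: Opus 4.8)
The plan is to exploit the symmetry between the two characterizations (b) and (c) of Proposition \ref{gldim}, together with the standard duality $\DTr$ for a selfinjective algebra. By Proposition \ref{gldim}, the module $M = \L\oplus M_\P$ is an Auslander generator precisely when $\gldim_{F_M}\L \le 1$, equivalently when $\gldim_{F^M}\L \le 1$. So it suffices to produce a relationship between the relative homological algebra governed by $F_M$ and that governed by $F^{M'}$, where $M' = \L\oplus\DTr M_\P$, and to conclude that $\gldim_{F^M}\L \le 1$ holds if and only if $\gldim_{F_{M'}}\L\le 1$ (or the analogous statement with the roles of $F$ and $F^{\,}$ swapped).

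First I would recall that over a selfinjective artin algebra the syzygy functor $\Omega$ is an equivalence on $\umod\L$, with quasi-inverse $\Omega^{-1}$, and that the Auslander--Reiten translate factors as $\DTr = \Omega^2\nu^{-1}$ (up to the shift conventions), with $\nu$ the Nakayama functor an exact equivalence of $\mod\L$. The key point is that an exact equivalence of $\mod\L$ carries $F_M$-exact sequences to $F_{F(M)}$-exact sequences, and $\Omega$ carries $F_M$-exact sequences to $F^{M}$-exact sequences in the appropriate sense, so the relative global dimension is unchanged under these operations up to the interchange of the ``projective'' and ``injective'' flavours. Concretely: since $\L\in\add M$ and $\L\in\add M'$, both $M$ and $M'$ are generator-cogenerators, and one checks that $\add M' = \add(\DTr M_\P)\oplus\add\L$ is obtained from $\add M$ by applying $\DTr$ to the non-projective part. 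Tracking an $F_M$-projective resolution of an arbitrary module $C$ through $\DTr$ (equivalently through $\nu^{-1}\Omega^2$) and using that $\DTr$ sends right $\add M$-approximations to right $\add M'$-approximations on the non-projective parts, I would show $\pd_{F_M}C\le 1$ for all $C$ forces $\pd_{F^{M'}}\DTr C\le 1$ for all $C$, hence $\gldim_{F^{M'}}\L\le 1$; the reverse implication is symmetric using $\TrD$.

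The step I expect to be the main obstacle is the bookkeeping around the projective summands and the shift discrepancies: $\DTr$ and $\TrD$ are only mutually inverse on the stable category, so one must be careful that replacing $M_\P$ by $\DTr M_\P$ and then by $\TrD\DTr M_\P$ recovers $M$ only up to projective summands --- which is harmless here since $\L$ is a summand of every module in sight, but it needs to be said cleanly. A cleaner route, which I would ultimately prefer for the writeup, is to avoid resolutions altogether: invoke Proposition \ref{gldim} to replace ``$M$ is an Auslander generator'' by ``$\gldim_{F_M}\L\le 1$'', then cite the established fact from \cite{AS} (or \cite{L1}) that for a selfinjective algebra $\gldim_{F_M}\L = \gldim_{F^{\,\DTr M}}\L$ under the identification of relative resolutions via $\Omega$ and $\nu$, and finally apply the equivalence (b)$\Leftrightarrow$(c) of Proposition \ref{gldim} to $M' = \L\oplus\DTr M_\P$ to return to the statement about Auslander generators. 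Either way the content is: Proposition \ref{gldim} reduces the claim to a statement about relative global dimension, and that statement is invariant under the stable equivalence induced by $\DTr$ because $\DTr$ intertwines the $F_M$- and $F^M$-theories on a selfinjective algebra.
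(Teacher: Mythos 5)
Your preferred ``cleaner route'' is essentially the paper's proof: Proposition \ref{gldim} combined with the fact from \cite{AS} that $F_M = F^{\DTr M}$ --- and since $\DTr M = \DTr M_\P$ and adding the injective summand $\L$ does not change $F^{(-)}$, the two relative exact structures coincide on the nose, so the relative global dimensions are literally equal and no translation of modules, resolutions, or approximations through $\DTr$ is needed. In particular the shakier steps of your first, longer argument (e.g.\ the claim that $\DTr$ sends right $\add M$-approximations to right $\add M^\star$-approximations, which is delicate because $\DTr$ is not exact) never have to be justified and can simply be dropped.
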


\begin{proof}
The statement is a straightforward consequence of Proposition \ref{gldim} and the fact that $F_M=F^{\DTr M}$ (see ~\cite{AS}). 
\end{proof}



\section{Mutation of Auslander generators}

In this section, we describe how we can construct, under certain assumptions, an Auslander generator from a given one. In particular, we give  some sufficient setting and assumptions under which we can replace an indecomposable direct summand of an Auslander generator, by another  indecomposable module, so that we get again an Auslander generator.

Let $\L$ be an artin algebra of infinite representation type and $M$ a
generator-cogenerator for $\mod\L$ such that $\gldim_{F_M}\L=1$. Let $N$ be an
indecomposable direct summand of $M$ which is not projective or injective and
\[0\to N \extto{\zeta} E \extto{\varepsilon} \tau^{-1}N \to 0\]
the almost split sequence starting at $N$. Set $M^{\star}=M/N \oplus \tau^{-1}N$, where $\tau$ is the Auslander-Reiten translation. We assume that $E$ is in $\add M$. Note here that the factor module $M/N$ is the cokernel of the split monomorphism $N\hookrightarrow M$. Our goal is to give necessary and sufficient conditions such that $\gldim_{F_{M^\star}}\L=1$. To do this we first need a couple of technical lemmas. With the above assumptions and notation we have the following lemma.

\begin{lem}\label{syzygies}
Let $C$ be an indecomposable $\L$-module. Denote by $l$ the length of the top of
$\underline{\Hom}_{F_{M/N}}(\tau^{-1}N,C)$ as an
$\underline{\End}_{F_{M/N}}(\tau^{-1}N)$-module. Let also $\alpha_C\colon M_C \to C$ be a
minimal right $\add M/N$-approximation of $C$. Then

\begin{enumerate}
\item[(a)]$\Omega_{F_{M/N}}(C)\simeq N^l\oplus \ker\alpha_C/N^l$
\item[(b)]$\Omega_{F_{M^{\star}}}(C)\simeq E^l\oplus \ker\alpha_C/N^l$
\end{enumerate}
and $\ker\alpha_C/N^l$ does not contain $N$ as a direct summand.
\end{lem}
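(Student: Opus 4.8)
The plan is to compare the minimal right $\add(M/N)$-approximation of $C$ with the minimal right $\add M^{\star}$-approximation, the difference being controlled by exactly $l$ copies of the given almost split sequence. First I would record the structural facts. Since $N$ is neither projective nor injective, $M/N$ and $M^{\star}$ are again generator--cogenerators, and $\add(M/N)$ still contains all indecomposable projectives. The middle term $E$ of an almost split sequence has no direct summand isomorphic to $N$, so $E\in\add M=\add(M/N\oplus N)$ forces $E\in\add(M/N)$. As $\tau^{-1}N$ is indecomposable and is not a summand of $M/N$, no morphism from an object of $\add(M/N)$ to $\tau^{-1}N$ is a split epimorphism, so each factors through the right almost split map $\varepsilon$; being also right minimal, $\varepsilon$ is \emph{the} minimal right $\add(M/N)$-approximation of $\tau^{-1}N$. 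Hence the almost split sequence is $F_{M/N}$-exact and $\Omega_{F_{M/N}}(\tau^{-1}N)\simeq N$.

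Now the main construction. Let $\alpha_C\colon M_C\to C$ be the minimal right $\add(M/N)$-approximation; it is an epimorphism (as $\add(M/N)$ contains the projectives), so $0\to K\to M_C\to C\to 0$ with $K:=\ker\alpha_C$ is $F_{M/N}$-exact and $\Omega_{F_{M/N}}(C)\simeq K$. Choose $h_1,\dots,h_l\colon\tau^{-1}N\to C$ lifting a minimal generating set of $\underline{\Hom}_{F_{M/N}}(\tau^{-1}N,C)$ over $\underline{\End}_{F_{M/N}}(\tau^{-1}N)$. Maps $M/N\to C$ factor through $\alpha_C$, and maps $\tau^{-1}N\to C$ are, modulo $\P_{F_{M/N}}(\tau^{-1}N,C)$ — hence modulo $M_C$ — spanned by the $h_i$; thus $(\alpha_C,h)\colon M_C\oplus(\tau^{-1}N)^l\to C$ is a right $\add M^{\star}$-approximation, right minimal when $C\not\simeq\tau^{-1}N$ (the case $C\simeq\tau^{-1}N$, where $l=1$ and $\Omega_{F_{M^{\star}}}(C)=0$, being immediate), so $\Omega_{F_{M^{\star}}}(C)\simeq P:=\ker(\alpha_C,h)$, which as a pullback sits in an exact sequence $0\to K\to P\to(\tau^{-1}N)^l\to 0$. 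For each $i$ the composite $h_i\varepsilon\colon E\to C$ factors through $\alpha_C$ because $E\in\add(M/N)$, say $h_i\varepsilon=\alpha_C g_i$; then $\alpha_C g_i\zeta=0$, so $g_i\zeta$ factors as $N\xrightarrow{\phi_i}K\hookrightarrow M_C$. The $\phi_i$ and $g_i$ assemble into a morphism from the $l$-fold direct sum of the almost split sequence to $0\to K\to P\to(\tau^{-1}N)^l\to 0$ which is the identity on $(\tau^{-1}N)^l$; hence $P$ is the pushout of $E^l\xleftarrow{\zeta^{\oplus l}}N^l\xrightarrow{\Phi}K$, with $\Phi=(\phi_1,\dots,\phi_l)$. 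Consequently, once $\Phi$ is known to be a split monomorphism onto a direct summand of $K$ whose complement $\ker\alpha_C/N^l$ has no summand isomorphic to $N$, we get $K\simeq N^l\oplus(\ker\alpha_C/N^l)$, which is (a), and, pushing out the almost split sequences along the split inclusion $N^l\hookrightarrow K$, $P\simeq E^l\oplus(\ker\alpha_C/N^l)$, which is (b).

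The heart is therefore the claim that the classes $\overline\phi_1,\dots,\overline\phi_l$ form a basis of the $D_N$-vector space $\Hom_\L(N,K)/\rad(N,K)$, where $D_N=\End_\L(N)/\rad\End_\L(N)$: its dimension is the multiplicity of $N$ as a summand of $K$, and $D_N$ is isomorphic to $\End_\L(\tau^{-1}N)/\rad$ since the endomorphism rings modulo radical of the two ends of an almost split sequence agree. One checks that $h\mapsto\overline\phi$ is a well-defined additive map on $\underline{\Hom}_{F_{M/N}}(\tau^{-1}N,C)$ which annihilates $\Hom_\L(\tau^{-1}N,C)\cdot\rad\End_\L(\tau^{-1}N)$ — lifting a radical endomorphism of $\tau^{-1}N$ along the almost split sequence produces a radical endomorphism of $N$ — so it descends to the top, and reversing the construction (given $\phi\colon N\to K$, the induced map $N\to M_C$ is not a split monomorphism since $N\notin\add(M/N)$, so it factors through $\zeta$, and so on) shows the resulting map onto the top is surjective; hence the $\overline\phi_i$ at least generate $\Hom_\L(N,K)/\rad(N,K)$, giving $l\ge$ (multiplicity of $N$ in $K$). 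The reverse inequality — linear independence of the $\overline\phi_i$, equivalently the absence of leftover copies of $N$ in the complement — is the step I expect to require the most care, and is where the standing hypothesis $\gldim_{F_M}\L=1$ (together with $E\in\add M$) should enter essentially rather than formally: comparing the minimal $\add(M/N)$- and $\add M$-approximations of $C$ yields an exact sequence $0\to K\to\Omega_{F_M}(C)\to N^{l'}\to 0$ with $l'$ the top-length of $\underline{\Hom}_{F_{M/N}}(N,C)$ and $\Omega_{F_M}(C)\in\add M$ (by $\gldim_{F_M}\L=1$), and combining this with $\Omega_{F_{M/N}}(\tau^{-1}N)\simeq N$, the almost split sequence, and $\Hom_\L(\tau^{-1}N,-)$ one wants to force the reverse correspondence to descend to tops, so that a nontrivial dependence among the $\overline\phi_i$ would contradict the minimality of $\{h_i\}$. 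Granting this, $l$ equals the multiplicity of $N$ in $K$ and the $\overline\phi_i$ are a basis, so $\Phi$ is a split monomorphism onto a direct summand with $N$-free complement, and (a) and (b) follow as in the previous paragraph.
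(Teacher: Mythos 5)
Your reduction of the lemma to a single claim about the comparison map $\Phi=(\phi_1,\dots,\phi_l)\colon N^l\to\ker\alpha_C$ is sensible, and the first half (the pullback/pushout bookkeeping, the factorizations $h_i\varepsilon=\alpha_Cg_i$, $g_i\zeta=\iota\phi_i$) is consistent with what actually happens. But the proof has a genuine gap exactly at the heart of the statement: you never prove that $\Phi$ is a split monomorphism whose complement contains no copy of $N$, i.e.\ that the classes $\overline\phi_1,\dots,\overline\phi_l$ are a basis of $\Hom_\L(N,\ker\alpha_C)/\rad(N,\ker\alpha_C)$. You only sketch surjectivity (the ``reversing the construction'' step, which needs a well-definedness check modulo $\rad\End_\L(\tau^{-1}N)$ and modulo maps factoring through $\add(M/N)$), and you explicitly defer the independence/"no leftover $N$" direction with ``Granting this''. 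Since parts (a), (b) and the final assertion of the lemma all hinge on precisely this count, the proposal as written does not prove the lemma. Your diagnosis of where the missing ingredient should come from is also off: the paper's proof of this lemma does not use $\gldim_{F_M}\L=1$ at all (that hypothesis enters only in the subsequent lemma); what is needed here is only the almost split sequence, $E\in\add(M/N)$, closure of $F_{M/N}$-exact sequences under pullbacks, and the standard factorization criterion of Auslander--Reiten--Smal\o\ (Chapter IV, Corollary 4.4), together with the minimality of the generating set $\{h_i\}$ over the local ring $\underline{\End}_{F_{M/N}}(\tau^{-1}N)$.

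For comparison, the paper gets the exact count constructively: it pulls back the approximation sequence along $f_1$, shows the resulting map $Y_1\to\tau^{-1}N$ is right almost split (not split epi because $f_1$ does not factor through $\alpha_C$; any non-isomorphism to $\tau^{-1}N$ factors through $\varepsilon$, which lifts because $E\in\add(M/N)$ and the sequence is $F_{M/N}$-exact), so the pulled-back sequence is the direct sum of the almost split sequence and a trivial complex; this splits one copy of $N$ off $\ker\alpha_C$ and produces a new $F_{M/N}$-exact sequence with middle term $M_C\oplus\tau^{-1}N$. Iterating $l$ times (using $N\nmid E$ to cancel at each stage) gives $\ker\alpha_C\simeq Y_l^l\oplus N^l$ and the sequence $0\to Y_l^l\oplus E^l\to M_C\oplus(\tau^{-1}N)^l\to C\to 0$; because the $f_i$ generate the top, hence the whole stable Hom over the local endomorphism ring, this last sequence is $F_{M^\star}$-exact, and the ARS criterion then shows $Y_l^l$ has no summand $N$ (otherwise $N$ would be a summand of $M_C\oplus(\tau^{-1}N)^l$). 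If you want to keep your global formulation, you should prove the independence of the $\overline\phi_i$ directly from the minimality of $\{h_i\}$ (a dependence lets you rewrite some $h_i$ in terms of the others modulo $\rad\End_\L(\tau^{-1}N)$ and maps factoring through $\add(M/N)$), not from $\gldim_{F_M}\L=1$; as it stands, that step is missing.
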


\begin{proof}
Assume first that $l=0$. Then all morphisms from $\tau^{-1}N$ to $C$, factor through
$\alpha_C$. This means that $\alpha_C$ is also a minimal right $\add
M^{\star}$-approximation of $C$. Hence, in this case, we have that
$\Omega_{F_{M/N}}(C)=\Omega_{F_{M^{\star}}}(C)=\ker\alpha_C$. It remains to show that
$\ker\alpha_C$ does not contain $N$ as a direct summand. Assume that $\ker\alpha_C =
N\oplus N'$. We have the exact sequence
\[0\to N\oplus N'\extto{i} M_C\extto{\alpha_C} C \to 0.\]
Note that $\alpha_C$ is an epimorphism since $M/N$ is a generator for $\mod\L$. Let
$\pi_N\colon N\oplus N'\to N$ be the natural projection. Since every morphism from
$\tau^{-1}N$ to $C$ factors through $\alpha_C$, we have that every morphism from $N\oplus
N'$ to $N$ factors through $i$ (see ~\cite[Chapter IV, Corollary 4.4]{ARS}). In particular $\pi_N$ factors through $i$. But this
implies that $N$ is a direct summand of $M_C$, which is not possible since $M_C$ is in
$\add M/N$. Thus, we have proven that $\ker \alpha_C$ does not contain $N$ as a direct summand. 

Assume now that $l>0$ and let $\{f_1,\ldots,f_l\}$ in $\Hom_\L(\tau^{-1}N,C)$ induce a minimal generating set of the top
of $\underline{\Hom}_{F_{M/N}}(\tau^{-1}N,C)$ as an
$\underline{\End}_{F_{M/N}}(\tau^{-1}N)$-module. We form the following pullback diagram
\[\xymatrix@C=25pt{0\ar[r]& \ker\alpha_C\ar[r]\ar@{=}[d]& Y_1\ar[r]^{g_1}\ar[d]&
\tau^{-1}N\ar[d]^{f_1}\ar[r]& 0 \\
0\ar[r]& \ker\alpha_C\ar[r] & M_C\ar[r]^{\alpha_C}& C \ar[r]& 0 .}\]
Note that the upper sequence is $F_{M/N}$-exact since the lower one is so (recall from Section 1 that $F_{M/N}$-exact sequences are closed under pullbacks). We show that
$g_1$ is a right almost split morphism. Since $f_1$ does not factor through $\alpha_C$, the morphism 
$g_1$ is not a split epimorphism. Let $g\colon X\to \tau^{-1}N$ be a morphism which is
not an isomorphism, with $X$ an indecomposable $\L$-module. Then there exists a morphism $\eta\colon X \to E$ such that
$\varepsilon\comp\eta = g$ (recall that $\varepsilon$ is the minimal right almost split
morphism ending at $\tau^{-1}N$). But
 since $E$ is in $\add M/N$ and the upper sequence of the above diagram is $F_{M/N}$-exact, there exists a morphism $\theta\colon E \to Y_1$ such that
$g_1\comp\theta = \varepsilon$. Then, we have that $g_1\comp\theta\comp\eta = g$, so $g$
factors through $g_1$. Hence, $g_1$ is a right almost split morphism and we have an
isomorphism of short exact sequences

\[\xymatrix@C=25pt{0\ar[r]& Y_1^1\oplus N\ar[r]^-{\left(\begin{smallmatrix} 1& 0 \\ 0 & \zeta
\end{smallmatrix}\right)}\ar[d]^{\wr}& Y_1^1\oplus E
\ar[r]^-{\left(\begin{smallmatrix}0&\varepsilon\end{smallmatrix}\right)}\ar[d]^{\wr}&
\tau^{-1}N\ar[r]\ar@{=}[d]& 0 \\
0\ar[r]& \ker\alpha_C\ar[r]& Y_1\ar[r]^{g_1}& \tau^{-1}N\ar[r]& 0  }\]
and an induced short exact sequence
\[(\mu_1)\colon 0 \to Y_1^1\oplus E \to M_C\oplus \tau^{-1}N\extto{\left(\begin{smallmatrix}\alpha_C & f_1
\end{smallmatrix}\right)}C\to 0\]
which is $F_{M/N}$-exact. If $l=1$, then $\mu_1$ is also $F_{M^{\star}}$-exact and then
$\Omega_{F_{M/N}}(C)\simeq N\oplus Y_1^1$ and $\Omega_{F_{M^{\star}}}(C)\simeq E\oplus
Y_1^1$. Using the same argument as in the case $l=0$, we can show that $Y_1^1$ does
not contain $N$ as a direct summand. If $l>1$, using the short exact sequence $(\mu_1)$, we form the following pullback diagram

\[\xymatrix@C=25pt{0\ar[r] & Y_1^1\oplus E \ar[r]\ar@{=}[d] & Y_2\ar[d]\ar[r]^{g_2} &
\tau^{-1}N\ar[d]^{f_2}\ar[r] & 0\\
0 \ar[r] & Y_1^1\oplus E \ar[r] & M_C\oplus
\tau^{-1}N\ar[r]^-{\left(\begin{smallmatrix}\alpha_C & f_1
\end{smallmatrix}\right)} & C \ar[r] & 0 }\]
As before, we can show that $g_2$ is a right almost split morphism, so we have an
isomorphism of exact sequences
\[\xymatrix@C=25pt{0\ar[r]& Y_2^1\oplus N\ar[r]^-{\left(\begin{smallmatrix} 1& 0 \\ 0 & \zeta
\end{smallmatrix}\right)}\ar[d]^{\wr}& Y_2^1\oplus E
\ar[r]^-{\left(\begin{smallmatrix}0&\varepsilon\end{smallmatrix}\right)}\ar[d]^{\wr}&
\tau^{-1}N\ar[r]\ar@{=}[d]& 0 \\
0\ar[r]& Y_1^1\oplus E \ar[r]& Y_2\ar[r]^{g_2}& \tau^{-1}N\ar[r]& 0.  }\]
Using the isomorphism $Y_2^1\oplus N \simeq Y_1^1\oplus E$ and since $N$ is not a direct
summand of $E$, we get that $Y_2^1\simeq Y_2^2\oplus E$ and $Y_1^1\simeq Y_2^2\oplus N$,
for some $\L$-module $Y_2^2$. So, $\ker\alpha_C\simeq Y_1^1\oplus N\simeq Y_2^2\oplus
N^2$. Moreover, we have an induced short exact sequence
\[(\mu_2)\colon 0 \to Y_2^2\oplus E^2 \to M_C\oplus \tau^{-1}N\oplus \tau^{-1}N
\extto{\left(\begin{smallmatrix}\alpha_C &
f_1 & f_2
\end{smallmatrix}\right)}C\to 0\]
which is $F_{M/N}$-exact. Repeating this process $l$ times, we can show that
$\Omega_{F_{M/N}}(C)=\ker\alpha_C\simeq Y_l^l\oplus N^l$, for some $\L$-module $Y_l^l$
and there is a short exact sequence
\[(\mu_l)\colon 0 \to Y_l^l\oplus E^l \to M_C\oplus \tau^{-1}N\oplus \cdots\oplus\tau^{-1}N
\extto{\left(\begin{smallmatrix}\alpha_C & f_1&\cdots & f_l
\end{smallmatrix}\right)}C\to 0\]
which is $F_{M/N}$-exact. Now since $\{f_1,\ldots,f_l\}$ is a minimal generating set of the
top of $\underline{\Hom}_{F_{M/N}}(\tau^{-1}N,C)$ as an
$\underline{\End}_{F_{M/N}}(\tau^{-1}N)$-module, $\mu_l$ is in addition
$F_{M^{\star}}$-exact. Hence $\Omega_{F_{M^{\star}}}(C)\simeq Y_l^l\oplus E^l$. Moreover,
using the same argument as for the case $l=0$, we can show that $N$ is not a direct
summand of $Y_l^l$ and the proof is now complete.
\end{proof}

For the next lemma, we still keep the notation and assumptions made in the beginning of the section.

\begin{lem}\label{CnotisotoN}
If $C$ is an indecomposable $\L$-module which is not isomorphic to $N$, then
$\pd_{F_{M^\star}}C\leq 1$.
\end{lem}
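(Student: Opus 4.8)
The goal is to prove $\pd_{F_{M^{\star}}}C\leq 1$, and I would attack it by showing that the first $F_{M^{\star}}$-syzygy $\Omega_{F_{M^{\star}}}(C)$ lies in $\add M^{\star}$. \emph{First I would reduce the problem using Lemma~\ref{syzygies}.} With $l$ and the minimal right $\add M/N$-approximation $\alpha_C\colon M_C\to C$ as there, part~(b) gives $\Omega_{F_{M^{\star}}}(C)\simeq E^{l}\oplus\ker\alpha_C/N^{l}$. Now $N$ is indecomposable and not projective, so it is not a direct summand of the middle term $E$ of the almost split sequence starting at $N$ (see~\cite{ARS}); since $E\in\add M$ this forces $E\in\add M/N\subseteq\add M^{\star}$. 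Moreover $\ker\alpha_C/N^{l}$ has no direct summand isomorphic to $N$ by Lemma~\ref{syzygies}, so if it belongs to $\add M$ it automatically belongs to $\add M/N\subseteq\add M^{\star}$. Hence it suffices to prove $\ker\alpha_C/N^{l}\in\add M$, equivalently (by Lemma~\ref{syzygies}(a)) that $\Omega_{F_{M/N}}(C)=\ker\alpha_C\in\add M$. If $C$ itself lies in $\add M$ this is clear, since $C\neq N$ then forces $C\in\add M/N$ and $\alpha_C$ is an isomorphism; so I may assume $C\notin\add M$, hence $\Omega_{F_M}(C)\neq 0$.

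\emph{Next I would compare the $F_{M/N}$-syzygy with the $F_M$-syzygy}, bringing in the hypothesis $\gldim_{F_M}\L=1$, which guarantees $\Omega_{F_M}(C)\in\add M$. Adjoining to $\alpha_C$ morphisms $h_{1},\dots,h_{a}\colon N\to C$ that induce a minimal generating set of $\underline{\Hom}_{F_{M/N}}(N,C)$ over $\underline{\End}_{F_{M/N}}(N)$, the morphism $(\alpha_C\ h)\colon M_C\oplus N^{a}\to C$ becomes a right $\add M$-approximation; as $\alpha_C$ is an epimorphism, its kernel $P$ fits into a short exact sequence
\[
0\to\ker\alpha_C\to P\to N^{a}\to 0 ,
\]
and $P\in\add M$ because $P$ differs from $\Omega_{F_M}(C)\in\add M$ only by a direct summand of $M_C\oplus N^{a}\in\add M$. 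Writing $\ker\alpha_C\simeq N^{l}\oplus(\ker\alpha_C/N^{l})$, it would then remain to check that the image of the extension class of this sequence in $\Ext^{1}_{\L}\bigl(N^{a},\ker\alpha_C/N^{l}\bigr)$ vanishes; granting that, $\ker\alpha_C/N^{l}$ splits off $P$ and so lies in $\add M$, which by the first paragraph finishes the proof.

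\emph{The main obstacle} is exactly that last claim: that the failure of $0\to\ker\alpha_C\to P\to N^{a}\to 0$ to split is concentrated in the $N$-isotypic summand $N^{l}$ of $\ker\alpha_C$, so that its complement $\ker\alpha_C/N^{l}$ is a direct summand of $P\in\add M$. I would try to prove this by unwinding the construction in Lemma~\ref{syzygies}, which identifies the copies of $N$ inside $\ker\alpha_C$ with the images of the compositions $N\extto{\zeta}E\to M_C$ built from the generators of $\underline{\Hom}_{F_{M/N}}(\tau^{-1}N,C)$ together with the relation $\varepsilon\comp\zeta=0$, and by exploiting the minimality of the $h_{i}$. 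This is also the point at which the assumption $C\neq N$ is genuinely needed: for $C=N$ one has $N\in\add M$, so $\Omega_{F_M}(N)=0$ and the comparison degenerates, consistent with the fact that $\pd_{F_{M^{\star}}}N$ need not be at most $1$.
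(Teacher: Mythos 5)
Your reduction is sound as far as it goes: via Lemma \ref{syzygies} it does suffice to show $\ker\alpha_C\in\add M$ (its $N$-free part then lands in $\add M/N\subseteq\add M^\star$, and $E\in\add M/N$). But the proposal stops exactly at the decisive point. You adjoin maps $h_1,\dots,h_a\colon N\to C$ to make $(\alpha_C\ h)$ a right $\add M$-approximation, get $0\to\ker\alpha_C\to P\to N^a\to 0$ with $P\in\add M$, and then must show that the component of the extension class in $\Ext^1_\L(N^a,\ker\alpha_C/N^l)$ vanishes so that $\ker\alpha_C/N^l$ splits off $P$. You explicitly flag this as the main obstacle and only sketch a plan (``unwinding the construction in Lemma \ref{syzygies}, \dots exploiting the minimality of the $h_i$''), with no actual argument. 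As written this is a genuine gap, not a routine verification: Lemma \ref{syzygies} identifies the copies of $N$ inside $\ker\alpha_C$ coming from the maps $\tau^{-1}N\to C$, but it says nothing about how the extension $0\to\ker\alpha_C\to P\to N^a\to 0$ distributes over the decomposition $N^l\oplus\ker\alpha_C/N^l$, and the maps $h_i\colon N\to C$ are unrelated to the data used there.

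The paper closes this gap with a short observation that makes your entire second paragraph unnecessary, and which your proposal misses: since $C$ is indecomposable and not isomorphic to $N$, no morphism $N\to C$ is a split monomorphism, so every $g\colon N\to C$ factors through the left almost split map $\zeta\colon N\to E$; as $E\in\add M/N$ and $\alpha_C$ is a right $\add M/N$-approximation, $g$ then factors through $\alpha_C$. Hence $\alpha_C$ is already a (minimal) right $\add M$-approximation of $C$ --- in your notation $a=0$, since every map $N\to C$ factors through an $F_{M/N}$-projective --- so $\ker\alpha_C=\Omega_{F_M}(C)\in\add M$ directly from $\gldim_{F_M}\L=1$, and Lemma \ref{syzygies}(b) finishes. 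Note also that this, rather than the degeneration you mention at the end, is where the hypothesis $C\not\simeq N$ is really used. Without this factorization argument (or a genuine proof of your splitting claim), the proposal is incomplete.
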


\begin{proof}
Let $C$ be an indecomposable $\L$-module which is not isomorphic to $N$ and consider the
short exact sequence
\[ 0\to \ker\alpha_C\to M_C \extto{\alpha_C} C\to 0,\]
where $\alpha_C$ is a minimal right $\add M/N$-approximation of $C$. We show that
$\alpha_C$ is in fact a minimal right $\add M$-approximation of $C$. To do this, we need to
show that any morphism from $N$ to $C$ factors through $M_C$. Let $g$ be in
$\Hom_\L(N,C)$.  Recall that $\zeta\colon N\to E$ is the left
almost split morphism starting at $N$. Then, since $C$ is not isomorphic to $N$, there exists a morphism $\theta\colon
E \to C$ such that $\theta\comp\zeta = g$. But then, since $E$ is in $\add M/N$ and
$\alpha_C$ is a right $\add M/N$-approximation, there exists a morphism $\eta\colon E\to
M_C$ such that $\alpha_C\comp\eta = \theta$. So we have that
$g=\alpha_C\comp\eta\comp\zeta$ and we have shown that $\alpha_C$ is a minimal $\add
M$-approximation of $C$. Since $\gldim_{F_M}\L=1$, we then have that
$\Omega_{F_{M/N}}(C)$, which is isomorphic to $\ker\alpha_C$, is in $\add M$. Using now Lemma 
\ref{syzygies} we get that $\Omega_{F_{M^{\star}}}(C)$ is in $\add M/N$ and hence in 
$\add M^{\star}$. So $\pd_{F_{M^{\star}}}C\leq 1$.
\end{proof}

We are now ready to prove the main theorem of this section. The notation and assumptions remain as in the beginning of the section.
\begin{thm}\label{mutation}
$\gldim_{F_{M^{\star}}}\L=1$ if and only if $\Omega_{F_{M/N}}(N)$ is in $\add(M\oplus
M^{\star})$.
\end{thm}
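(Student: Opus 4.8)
The plan is to reduce the whole problem to the single module $N$. First I would note that an $F_{M^\star}$-projective resolution of a direct sum is the direct sum of the resolutions of the summands, so $\gldim_{F_{M^\star}}\L=\sup\{\pd_{F_{M^\star}}C\mid C\text{ indecomposable}\}$, and that $\gldim_{F_{M^\star}}\L\geq 1$ because $\L$ is of infinite representation type whereas $\add M^\star$ contains only finitely many indecomposable objects. Since Lemma \ref{CnotisotoN} already gives $\pd_{F_{M^\star}}C\leq 1$ for every indecomposable $C\not\simeq N$, this leaves exactly one module to worry about: $\gldim_{F_{M^\star}}\L=1$ if and only if $\pd_{F_{M^\star}}N\leq 1$, i.e. if and only if $\Omega_{F_{M^\star}}(N)\in\add M^\star$.

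Next I would apply Lemma \ref{syzygies} to $C=N$. Writing $l$ for the length of the top of $\underline{\Hom}_{F_{M/N}}(\tau^{-1}N,N)$ over $\underline{\End}_{F_{M/N}}(\tau^{-1}N)$, $\alpha_N\colon M_N\to N$ for a minimal right $\add(M/N)$-approximation, and $K:=\ker\alpha_N/N^l$, the lemma yields $\Omega_{F_{M/N}}(N)\simeq N^l\oplus K$ and $\Omega_{F_{M^\star}}(N)\simeq E^l\oplus K$, where $K$ has no direct summand isomorphic to $N$. So the two syzygies share the same $N$-free tail $K$ and differ only in whether the "$N$-part" is $N^l$ or $E^l$. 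I would also record two bookkeeping facts: $N$ is a direct summand of $M$; and since $E\in\add M$ and $N$ is not a direct summand of the middle term $E$ of an almost split sequence, every indecomposable summand of $E$ is a summand of $M/N$, hence $E\in\add(M/N)\subseteq\add M^\star$.

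The equivalence then falls out of a Krull--Schmidt comparison. If $\gldim_{F_{M^\star}}\L=1$, then $E^l\oplus K=\Omega_{F_{M^\star}}(N)\in\add M^\star$, so its summand $K$ lies in $\add M^\star$, and therefore $\Omega_{F_{M/N}}(N)=N^l\oplus K\in\add(M\oplus M^\star)$ since $N^l\in\add M$. Conversely, if $N^l\oplus K=\Omega_{F_{M/N}}(N)\in\add(M\oplus M^\star)=\add(M\oplus\tau^{-1}N)$, then each indecomposable summand of $K$ is either $\tau^{-1}N$ or an indecomposable summand of $M$ different from $N$ — hence a summand of $M/N$ — so $K\in\add(M/N\oplus\tau^{-1}N)=\add M^\star$; combined with $E\in\add M^\star$ this gives $\Omega_{F_{M^\star}}(N)=E^l\oplus K\in\add M^\star$, i.e. $\gldim_{F_{M^\star}}\L=1$.

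Because Lemmas \ref{syzygies} and \ref{CnotisotoN} already carry the real weight, I do not expect a genuine obstacle; what needs care is the Krull--Schmidt step, where one must check that the $N$-free part $K$ is transported correctly between $\add(M\oplus M^\star)$ and $\add M^\star$ — this is precisely where the hypothesis $E\in\add M$ (together with $N\notin\add E$) enters, and where it matters that $\add(M\oplus M^\star)$ is exactly $\add M$ with $\tau^{-1}N$ adjoined. The only other point to remember is the harmless lower bound $\gldim_{F_{M^\star}}\L\geq 1$ coming from infinite representation type, without which one would only get $\gldim_{F_{M^\star}}\L\leq 1$.
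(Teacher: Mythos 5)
Your proof is correct and follows essentially the same route as the paper: reduce to the single module $N$ via Lemma \ref{CnotisotoN}, then compare $\Omega_{F_{M/N}}(N)\simeq N^l\oplus K$ with $\Omega_{F_{M^\star}}(N)\simeq E^l\oplus K$ using Lemma \ref{syzygies} and the fact that $E\in\add(M/N)$. The only difference is that you spell out the Krull--Schmidt bookkeeping (and the lower bound $\gldim_{F_{M^\star}}\L\geq 1$ from infinite representation type) which the paper compresses into a single sentence.
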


\begin{proof}
In view of Lemma \ref{CnotisotoN}, it is enough to show that $\gldim_{F_{M^{\star}}}\L=1$
if and only if $\pd_{F_{M^{\star}}}(N)\leq 1$. So $\gldim_{F_{M^{\star}}}\L=1$ if and
only if $\Omega_{F_{M^\star}}(N)$ is in $\add M^\star$. By Lemma \ref{syzygies}, and
since $E$ is in $\add M/N$,  we have that $\Omega_{F_{M^\star}}(N)$ is in $\add M^\star$
if and only if $\Omega_{F_{M/N}}(N)$ is in $\add (M\oplus M^\star)$.
\end{proof}

In terms of representation dimension and Auslander generators the above theorem is translated as follows:

\begin{cor}
Let $\L$ be an artin algebra with $\repdim \L = 3$ and $M$ an Auslander generator of $\L$. Let $N$ be an indecomposable, non-projective and non-injective, direct summand of $M$, such that the middle term of the almost split sequence starting at $N$ is in $\add M$. Set $M^{\star}=M/N\oplus \tau^{-1}N$. Then,
 $M^\star$ is an Auslander
generator if and only if $\Omega_{F_{M/N}}(N)$ is in $\add(M\oplus M^{\star})$.
\end{cor}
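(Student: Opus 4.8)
The plan is to reduce the Corollary to Theorem~\ref{mutation} by translating between the language of representation dimension and Auslander generators and that of relative homological algebra. First I would invoke Proposition~\ref{gldim}: since $\repdim\L=3$, an Auslander generator $M$ of $\L$ is precisely a generator-cogenerator with $\gldim_{F_M}\L=1$. Thus the hypotheses of the Corollary (that $M$ is an Auslander generator, $N$ an indecomposable non-projective non-injective summand, and the middle term $E$ of the almost split sequence $0\to N\to E\to\tau^{-1}N\to 0$ lies in $\add M$) are exactly the standing assumptions of the section that precede Theorem~\ref{mutation}, with $M^\star = M/N\oplus\tau^{-1}N$ as defined there.

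Next I would note that $M^\star$ is again a generator-cogenerator for $\mod\L$: it contains all the indecomposable projectives and injectives as direct summands, because $M$ does and $N$, being neither projective nor injective, can be removed without losing any of them, while adding $\tau^{-1}N$ does no harm. Since $\L$ has infinite representation type (which is implicit, as $\repdim\L=3>2$ by Auslander's theorem characterizing representation-finite algebras as those of representation dimension at most two), the setup of the section applies verbatim to $M^\star$.

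Now apply Proposition~\ref{gldim} again in the other direction: $M^\star$ is an Auslander generator of $\L$ if and only if $\gldim\End_\L(M^\star)\le 3$, equivalently $\gldim_{F_{M^\star}}\L\le 1$. Since $M^\star$ is a generator-cogenerator, $\gldim_{F_{M^\star}}\L\ge 1$ (it cannot be $0$, as that would force $\L$ to be representation finite), so this is equivalent to $\gldim_{F_{M^\star}}\L=1$. By Theorem~\ref{mutation}, this holds if and only if $\Omega_{F_{M/N}}(N)$ is in $\add(M\oplus M^\star)$, which is the desired conclusion.

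There is essentially no obstacle here; the Corollary is a direct restatement of the theorem once one unwinds the dictionary provided by Proposition~\ref{gldim}. The only points that require a word of care are the verification that $M^\star$ remains a generator-cogenerator (so that the equivalence in Proposition~\ref{gldim} is applicable to it) and the observation that for an algebra of infinite representation type the relative global dimension with respect to any generator-cogenerator is at least one, so that the inequality $\gldim_{F_{M^\star}}\L\le 1$ is the same as the equality $\gldim_{F_{M^\star}}\L=1$ appearing in Theorem~\ref{mutation}.
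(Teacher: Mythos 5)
Your proposal is correct and matches the paper's (implicit) argument: the Corollary is presented there as a direct translation of Theorem~\ref{mutation} via Proposition~\ref{gldim}, exactly as you do, with the same routine checks that $M^{\star}$ remains a generator-cogenerator and that the relevant relative global dimensions cannot be zero since $\repdim\L=3$ forces infinite representation type.
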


We complete this section with a result showing that the endomorphism ring of the Auslander generator $M^\star$, that is constructed from the Auslander generator $M$ as above, is derived equivalent to the endomorphism ring of $M$.

\begin{prop}\label{derived}
Let $M=M/N\oplus N$ and $M^\star=M/N\oplus \tau^{-1}N$ be Auslander generators of $\L$, where $N$ is an indecomposable direct summand of $M$ which is not projective or injective. Let also 
\[(\eta)\colon0\to N\to E\to \tau^{-1} N\to 0\]
be the almost split sequence starting at $N$ and assume that $E$ is in $\add M$. Then the endomorphism rings $\End_\L(M)$ and $\End_\L(M^\star)$ are derived equivalent.
\end{prop}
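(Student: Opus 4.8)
The plan is to realise $\End_\L(M^\star)$ as the endomorphism ring of a classical tilting module over $\Gamma:=\End_\L(M)$ and then invoke the tilting theorem of Happel. Throughout I would use that, since $M$ is a generator, the functor $\Hom_\L(M,-)\colon\mod\L\to\mod\Gamma$ is fully faithful and restricts to an equivalence $\add M\xrightarrow{\ \sim\ }\operatorname{proj}\Gamma$; in particular $\Hom_\Gamma(\Hom_\L(M,X),\Hom_\L(M,B))\cong\Hom_\L(X,B)$ for every $X\in\add M$ and every $B\in\mod\L$. One may assume $\tau^{-1}N\not\cong N$ (otherwise $M^\star\cong M$ and there is nothing to prove), and since $M$ and $M^\star$ are basic one has $\tau^{-1}N\notin\add M$. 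Set $Y:=\Hom_\L(M,M^\star)=\Hom_\L(M,M/N)\oplus\Hom_\L(M,\tau^{-1}N)$, viewed as a $\Gamma$-module.

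The first step is to note that $(\eta)$ is $F_M$-exact. Indeed, no morphism $M\to\tau^{-1}N$ is a split epimorphism, because $\tau^{-1}N\notin\add M$; hence every such morphism factors through the right almost split morphism $E\to\tau^{-1}N$, so $\Hom_\L(M,E)\to\Hom_\L(M,\tau^{-1}N)$ is surjective. By left exactness of $\Hom_\L(M,-)$ together with this surjectivity we obtain a short exact sequence of $\Gamma$-modules
\[0\longrightarrow\Hom_\L(M,N)\longrightarrow\Hom_\L(M,E)\longrightarrow\Hom_\L(M,\tau^{-1}N)\longrightarrow 0,\]
whose first two terms are projective since $N,E\in\add M$. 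Consequently $\pd_\Gamma\Hom_\L(M,\tau^{-1}N)\le1$, so $\pd_\Gamma Y\le1$.

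The second step is the vanishing $\Ext^1_\Gamma(Y,Y)=0$. As $\Hom_\L(M,M/N)$ is projective, $\Ext^1_\Gamma(Y,Y)=\Ext^1_\Gamma(\Hom_\L(M,\tau^{-1}N),Y)$; applying $\Hom_\Gamma(-,Y)$ to the projective presentation above and using the identifications recalled in the first paragraph, this group is the cokernel of
\[\Hom_\L(E,M^\star)\xrightarrow{\ -\circ\zeta\ }\Hom_\L(N,M^\star).\]
Now no morphism $N\to M^\star$ is a split monomorphism (since $N$ is not a summand of $M/N$ and $N\not\cong\tau^{-1}N$), so every such morphism factors through the left almost split morphism $\zeta\colon N\to E$; hence the displayed map is surjective and $\Ext^1_\Gamma(Y,Y)=0$. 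The third step is a count: $\Hom_\L(M,\tau^{-1}N)$ is indecomposable, since its endomorphism ring is $\End_\L(\tau^{-1}N)$ by full faithfulness and this ring is local, and it is not isomorphic to any indecomposable projective $\Gamma$-module (otherwise $\tau^{-1}N$ would lie in $\add M$). Therefore $Y$ has exactly as many pairwise non-isomorphic indecomposable summands as $\Gamma$ has simple modules. By Bongartz's lemma $Y$ is a direct summand of a tilting $\Gamma$-module, and since a tilting module has as many non-isomorphic indecomposable summands as there are simple modules, $Y$ is itself a tilting $\Gamma$-module; by Happel's theorem $\Gamma$ and $\End_\Gamma(Y)$ are then derived equivalent. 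Finally, full faithfulness of $\Hom_\L(M,-)$ yields $\End_\Gamma(Y)=\End_\Gamma(\Hom_\L(M,M^\star))\cong\End_\L(M^\star)$, which completes the argument.

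The main obstacle is not any single computation but making sure each hypothesis is used where it is genuinely needed. The crux is the second step: the vanishing $\Ext^1_\Gamma(Y,Y)=0$ rests both on the assumption $E\in\add M$ (so that $\Hom_\L(M,E)$ is projective, which is what produces the short projective resolution of $\Hom_\L(M,\tau^{-1}N)$) and on the almost split property of $(\eta)$, entering through the fact that $\zeta$ is left almost split, which forces the required surjectivity. One should also be careful to verify that $(\eta)$ really is $F_M$-exact — this is exactly where $\tau^{-1}N\notin\add M$, hence the basicness of $M^\star$, is used — and to fix the left/right module conventions so that the tilting theorem produces a derived equivalence between $\End_\L(M)$ and $\End_\L(M^\star)$ itself rather than its opposite.
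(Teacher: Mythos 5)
Your proof is correct, but it takes a genuinely different route from the paper. The paper verifies the hypotheses of Proposition 2.10 of~\cite{L1} -- namely that $(\eta)$ is simultaneously a minimal $F^{M^\star}$-injective resolution of $N$ and a minimal $F_M$-projective resolution of $\tau^{-1}N$ -- deduces the vanishing of the relative groups $\Ext^i_{F^M}(M^\star,M^\star)$ and $\Ext^i_{F_{M^\star}}(M,M)$ for $i>0$, and then invokes Theorem 2.6 of~\cite{L1} to conclude that $\Hom_\L(M^\star,M)$ is a cotilting $\End_\L(M^\star)^{\op}$-$\End_\L(M)$-bimodule, whence derived equivalence by Happel. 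You instead work directly over $\Gamma=\End_\L(M)$ and show by hand that $\Hom_\L(M,M^\star)$ is a classical tilting module: the $F_M$-exactness of $(\eta)$ (which indeed needs $\tau^{-1}N\notin\add M$) gives the two-term projective resolution via projectivization, the left almost split property of $\zeta$ gives $\Ext^1_\Gamma(Y,Y)=0$, and the summand count plus Bongartz's lemma finishes, with $\End_\Gamma(Y)\cong\End_\L(M^\star)$ by full faithfulness of $\Hom_\L(M,-)$. This is essentially the ``other side'' of the same Hom-bimodule the paper uses, but your version is self-contained -- it replaces the relative-homology machinery of~\cite{L1} by classical tilting theory -- and it uses strictly weaker hypotheses: you never need $M$ or $M^\star$ to be Auslander generators, nor their endomorphism rings to have finite global dimension (consistent with, and slightly sharpening, the paper's remark that the proposition holds in any representation dimension); you only need $M$ to be a generator, $E\in\add M$, and the non-redundancy conditions $\tau^{-1}N\notin\add M$ and $N\notin\add M^\star$, which you extract from basicness and which the paper's proof also needs implicitly for $(\eta)$ to be a \emph{minimal} $F_M$-projective resolution of $\tau^{-1}N$. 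The price is the extra bookkeeping you already flag: fixing the left/right module conventions so that Happel's theorem yields a derived equivalence between $\End_\L(M)$ and $\End_\L(M^\star)$ themselves, which is harmless since derived equivalence passes to opposite rings.
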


\begin{proof}
The statement is proved by using ~\cite[Proposition 2.10]{L1} and ~\cite[Theorem 2.6]{L1}.  Both $M$ and $M^\star$ are generators-cogenerators and their endomorphism rings have finite global dimension. Moreover the almost split sequence $(\eta)$ satisfies condition (b) of ~\cite[Proposition 2.10]{L1}. That is, $(\eta)$ is a minimal $F^{M^\star}$-injective resolution of $N$ and a minimal $F_M$-projective resolution of $\tau^{-1}N$. Hence, condition (a) of ~\cite[Proposition 2.10]{L1} is satisfied. That is, $\Ext_{F^M}^i(M^\star,M^\star)=(0)$ and $\Ext_{F_{M^\star}}^i(M,M)=(0)$, for $i>0$. Then, by ~\cite[Theorem 2.6]{L1}, this is equivalent to $\Hom_\L(M^\star, M)$ being a cotilting $\End_\L(M^\star)^{\op}$-$\End_\L(M)$-bimodule. This implies that the endomorphism rings $\End_\L(M^\star)$ and $\End_\L(M)$ are derived equivalent (\cite{H}),  which completes the proof.
\end{proof}

Note that Proposition \ref{derived} is true for artin algebras of any representation dimension.

\section{Selfinjective algebras with radical cube zero}

This section deals with selfinjective artin algebras with radical cube zero. If such an algebra is of infinite representation type, it is proven that it has representation dimension equal to three (see ~\cite{A}). We show that for a particular class of Auslander generators of these algebras the rather strict assumptions of the previous section are satisfied. Hence, in this case, we can construct by mutation new Auslander generators from given ones. 

In the rest of the section $\L$ denotes a selfinjective algebra of infinite representation type such that $\rad^3 \L=0$.  We set $A=\L/\Soc \L$. Then $\rad^2 A=0$, so there exists a hereditary algebra $H$ and a functor $F\colon \mod A\to \mod H$ that induces an equivalence between the stable categories $\underline{\mod}A$ and $\underline \mod H$ (see ~\cite[X.2]{ARS}). We denote by $\rrad_\L$ the radical of $\L$ and by $\rrad_A$ the radical of A. We keep the above notation for the rest of this section. We need the following lemma: 

\begin{lem}\label{stequiv}
Let $0\to C\extto{\alpha} B \extto{\beta} S\to 0$ be a short exact sequence in $\mod A$, with $S$ a simple $\L$-module. If $C$ does not contain any simple direct summands, then the sequence $0\to F(C)\to F(B)\to F(S)\to 0$ is also exact.
\end{lem}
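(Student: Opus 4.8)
The plan is to translate the statement into the language of the stable equivalence $F$ and use the well-known description of the functor $F$ coming from the equivalence $\underline{\mod}A\simeq\underline{\mod}H$ (see~\cite[X.2]{ARS}): on objects without simple summands it is essentially exact in the sense that it sends minimal projective presentations to minimal projective presentations. Concretely, I would first reduce to checking that the connecting map $F(S)\to\Omega^{-1}_AF(C)$ (equivalently, the element of $\Ext^1_H(F(S),F(C))$ obtained by applying $F$ to the class of the given sequence) is a monomorphism, or rather that the middle term $F(B)$ has the expected length; since $F$ respects the stable category, the class of $0\to F(C)\to F(B)\to F(S)\to 0$ is automatically a short exact sequence up to adding projective(-injective) summands, and the content of the lemma is that no such correction is needed.

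The key steps, in order, would be: (1) Recall that $A=\L/\Soc\L$ has $\rad^2A=0$, so that $F$ can be taken to be the functor $\Coker(P_1\to P_0)\mapsto$ (the $H$-module given by the same presentation), and in particular $F$ is exact on sequences all of whose terms are without simple summands. (2) Observe that $S$, being simple, may or may not be a summand-free object in the relevant sense, so the only possible failure of exactness of $0\to F(C)\to F(B)\to F(S)\to 0$ comes from $S$; analyze the minimal projective presentation of $B$. Here I would use that, since $\rad^2 A=0$, a module has a simple summand exactly when its minimal projective presentation has a common summand between $P_1$ and $P_0$ shifted appropriately; the hypothesis that $C$ has no simple summands controls $\Omega_A C$. (3) Pull back the almost split / given sequence along a projective cover of $S$ to get a commutative diagram relating the presentation of $B$ to those of $C$ and $S$, and check that the hypothesis ``$C$ has no simple summand'' forces the presentation of $B$ to be the ``sum'' of those of $C$ and $S$ with no cancellation, so that applying $F$ preserves exactness. (4) Conclude that $\length F(B)=\length F(C)+\length F(S)$ and that the maps $F(\alpha),F(\beta)$ are mono/epi respectively, giving the desired short exact sequence.

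The main obstacle I expect is step (3): making precise why the absence of simple summands in $C$ is exactly the condition that prevents a projective summand of $P_0(B)$ from being ``absorbed'' when passing to $H$ via $F$. This is really a statement about how $F$ interacts with non-split extensions by a simple module, and it will require carefully unwinding the construction of $F$ in~\cite[X.2]{ARS} rather than using $F$ as a black box — in particular, one needs that $F$ sends the class in $\Ext^1_A(S,C)$ to a nonzero class in $\Ext^1_H(F(S),F(C))$, which fails precisely when the extension ``creates'' a simple summand, and the hypothesis rules that out. Once this compatibility of $F$ with the relevant $\Ext$ groups is established, the exactness of $0\to F(C)\to F(B)\to F(S)\to 0$ follows formally, since a short exact sequence is determined by its end terms together with its class in $\Ext^1$, and length is additive.
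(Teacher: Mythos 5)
There is a genuine gap: your proposal never actually carries out the step that constitutes the lemma. You correctly sense that everything hinges on how the hypothesis ``$C$ has no simple direct summands'' interacts with the construction of $F$, but you defer exactly that point (``the main obstacle I expect is step (3)\dots it will require carefully unwinding the construction of $F$''), so the crucial argument is missing rather than supplied. Moreover, the route you sketch leans on properties of $F$ that are not established and are not needed: $F$ is a functor inducing a stable equivalence, not an exact functor, so it does not come with a natural action on $\Ext^1$, and ``the class of $0\to F(C)\to F(B)\to F(S)\to 0$ is automatically a short exact sequence up to projective summands'' is not a meaningful reduction. The reformulation ``one needs that $F$ sends the class in $\Ext^1_A(S,C)$ to a nonzero class'' is also off target: the lemma makes no non-splitness assumption (if the sequence splits, the image sequence splits and is still exact), and exactness of the image of the given maps $F(\alpha),F(\beta)$ is not equivalent to non-vanishing of an $\Ext$-class. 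Your auxiliary claims in steps (1)--(2) (that $F$ is exact on sequences of modules without simple summands, and that a module over a radical square zero algebra has a simple summand exactly when its minimal presentation has a ``common summand'') are asserted, not proved, and the first is essentially a variant of the lemma itself.

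The intended argument is much more concrete and uses the explicit description of $F$ for the radical square zero algebra $A=\L/\Soc\L$ from \cite[X.2]{ARS}: $F(X)$ is built from the pair $(\rrad_A X,\ X/\rrad_A X)$, so exactness of $0\to F(C)\to F(B)\to F(S)\to 0$ amounts to exactness of the two induced sequences $0\to\rrad_A C\to\rrad_A B\to\rrad_A S\to 0$ and $0\to C/\rrad_A C\to B/\rrad_A B\to S\to 0$. Since $S$ is simple, $\rrad_A S=0$ and $\beta$ factors through $B\to B/\rrad_A B$, which yields a monomorphism $i\colon\rrad_A B\to C$. Because $\rrad_A^2=0$, the image of $i$ is semisimple, and since $C$ has no simple direct summands every simple submodule of $C$ lies in $\rrad_A C$; hence $i(\rrad_A B)\subseteq\rrad_A C$. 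Combined with the monomorphism $\alpha|_{\rrad_A C}\colon\rrad_A C\to\rrad_A B$ in the other direction, this forces $\rrad_A C=\rrad_A B$, which gives exactness of both induced sequences and hence of $0\to F(C)\to F(B)\to F(S)\to 0$. This is where the hypothesis on $C$ does its work; your sketch gestures at it but does not deliver it, so as it stands the proposal does not prove the lemma.
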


\begin{proof}
We need to show that the induced sequences 
\[0\to\rrad_A C \to \rrad_A B \to \rrad_A S \to 0\]
and
\[0\to C/\rrad_A C\to B/\rrad_A B \to S/\rrad_A S\to 0\]
are exact. Since $S$ is simple, we have that $\rrad_A S =0$, $S/\rrad_A S=S$ and the map $\beta$ factors through the natural projection $B\to B/\rrad_A B$, so we have the following pushout diagram

\[\xymatrix@C=25pt{&0\ar[d]&0\ar[d]&&\\
&\rrad_A B\ar[d]^{i}\ar@{=}[r]&\rrad_A B\ar[d]&&\\
0\ar[r]&C\ar[d]\ar[r]^{\alpha}&B\ar[d]\ar[r]^{\beta}&S\ar@{=}[d]\ar[r]&0\\
0\ar[r]&C/\rrad_A B\ar[r]\ar[d]&B/\rrad_A B\ar[r]\ar[d]&S\ar[r]&0\\
&0&0&&
}\]

Since $C$ does not contain any simple direct summands the image of the monomorphism $i\colon \rrad_A B\to C$ is contained in $\rrad_A C$. So $\rrad_A B$ is isomorphic to a submodule of $\rrad_A C$. On the other hand, the monomorphism $\alpha\colon C\to B$ induces a monomorphism $\alpha|_{\rrad_A C}\colon \rrad_A C \to \rrad_A B$. Hence $\rrad_A C =\rrad_ AB$ and consequently $C/\rrad_A B=C/\rrad_A C$ which completes the proof.
\end{proof}

Let $\Sigma$ be a complete slice in the AR-quiver of $H$ that does not contain any projective or injective $H$-modules. For the definition and properties of a complete slice we refer the reader to ~\cite{R} (see also ~\cite{APT}). Consider the $\L$-module $M=\L\oplus M_\P$, where $M_\P=F^{-1}(\Sigma)$. Let $N$ be an indecomposable direct summand of $M_\P$ such that $F(N)$ is a source on $\Sigma$. Then, if 
\[0\to N \extto{\zeta} E \extto{\varepsilon} \tau^{-1}N \to 0\]
is the almost split sequence starting at $N$, the middle term $E$ is in $\add M_\P$ since $\Sigma$ is a complete slice. Set $M^{\star}=M/N\oplus \tau^{-1}N$. Then, with the above assumptions and notation we have the following.

\begin{lem}\label{slicetop}
The top of $M_\P/N$ contains all simple $\L$-modules.
\end{lem}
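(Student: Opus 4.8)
The plan is to translate the claim about $M_\P/N$ into a statement about the complete slice $\Sigma$ under the stable equivalence $F$, and then exploit the fact that $\Sigma$ is a complete slice in a hereditary algebra. Recall $M_\P = F^{-1}(\Sigma)$, so $M_\P/N = F^{-1}(\Sigma \setminus \{F(N)\})$ up to adding/removing simple summands (which are killed by $F$), where $F(N)$ is a source on $\Sigma$. The top of $M_\P/N$ contains a simple module $S$ if and only if $S$ is not in the radical of $M_\P/N$; I want to show that every simple $\L$-module $S$ occurs in the top of some indecomposable summand of $M_\P/N$.

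First I would recall the key structural fact about a complete slice $\Sigma$ in $\mod H$: since $\Sigma$ contains no projective or injective $H$-modules, for each vertex $i$ of the quiver of $H$ there is a unique indecomposable $X_i \in \Sigma$ corresponding to $i$, and removing the source $F(N)$ from $\Sigma$ and replacing it by $\tau_H^{-1} F(N)$ yields another complete slice. The point is that a complete slice is a "section" meeting every $\tau_H$-orbit exactly once, so it is rich enough to detect all the simple composition factors. Concretely, I would use the correspondence (via $\rad^2 A = 0$ and the equivalence $\underline{\mod}\, A \simeq \underline{\mod}\, H$) between the top of an $A$-module and the corresponding data on the $H$-side: the top of an indecomposable $A$-module $Y$ with $\rad^2_A Y = 0$ is read off from $F(Y)$ by looking at which simple $H$-modules appear in $F(Y)/\rad F(Y)$, and this is compatible with the quiver of $H$, whose vertices index the simple $\L$-modules.

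The main step is then: since $\Sigma$ is a complete slice, $\bigoplus_{X \in \Sigma} X$ is a tilting $H$-module (sincere, in particular), so every simple $H$-module $S_i$ appears in the top of some $X \in \Sigma$; moreover one checks that removing the single source $F(N)$ does not destroy this, because the source $F(N)$ of $\Sigma$ has its top also appearing in the tops of its immediate successors on $\Sigma$ (the arrows out of $F(N)$ in the slice force the simple top of $F(N)$ to reappear lower in neighbouring slice modules), by the mesh/almost-split structure on $\Sigma$. Pulling this back through $F^{-1}$ and using $M_\P/N = \bigoplus_{X \in \Sigma,\, X \neq F(N)} F^{-1}(X)$ (modulo simple summands), together with Lemma~\ref{stquiv} to control how $F$ and $F^{-1}$ interact with radicals of $A$-modules, gives that every simple $\L$-module lies in the top of $M_\P/N$.

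The hard part will be the bookkeeping at the transition from $\mod H$ to $\mod \L$: $F$ is only a stable equivalence, so I must carefully track the simple (hence $\add \L$, hence $F$-killed) summands and the difference between $\rad_A$ and $\rad_\L$, and verify that "$S_i$ is in the top on the $H$-side" really does imply "$S_i$ is in the top of the corresponding $\L$-module" — this is where Lemma~\ref{stequiv} and the hypothesis that the relevant modules have no simple direct summands do the work. The other delicate point is confirming that excising exactly the source $F(N)$ (and no other summand) still leaves a sincere module; this uses that $F(N)$ is a \emph{source} of $\Sigma$, so its simple top is subdominant in at least one other slice module, which would fail for a general summand.
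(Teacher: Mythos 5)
There is a genuine gap at the central step on the $H$-side. You claim that, since $\Sigma$ is a complete slice and hence sincere, every simple $H$-module appears in the top of some module on $\Sigma$ (and, after removing $F(N)$, of $\Sigma/F(N)$). Sincerity only says that every simple $H$-module is a \emph{composition factor} of $\Sigma$; it does not place it in the top (already for the path algebra of $1\to 2$, the slice consisting of the length-two indecomposable and its top is sincere, yet the other simple occurs only in a socle). Moreover your dictionary between $\L$ and $H$ is off: the vertices of $H$ do not index the simple $\L$-modules — $H$ is the separated-quiver hereditary algebra, with twice as many simples, and $F$ kills projective $A$-modules, not simple ones. The point the paper uses, and which is missing from your proposal, is that the simple $\L$-modules correspond under $F$ precisely to the simple \emph{injective} $H$-modules, and a simple injective module, whenever it is a composition factor of a module $X$, automatically lies in the top of $X$ (any occurrence $X_2/X_1\simeq S$ gives a split embedding $S\hookrightarrow X/X_1$, hence a simple quotient of $X$). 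With this reduction, sincerity of $\Sigma$ does yield all the \emph{relevant} simples in the top; without it, your first step fails and the proposal does not prove the lemma.

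Your second step — that deleting the source $F(N)$ loses nothing from the top — is in substance the paper's argument, but it is stated too loosely to stand on its own: $F(N)$ need not have simple top, and the precise fact needed is that, because $N$ (hence $F(N)$) is not simple, the left almost split map $F(N)\to F(E)$ induces a split monomorphism on tops, while $F(E)\in\add\Sigma$ exactly because $F(N)$ is a source of the slice. The non-simplicity hypothesis is essential (for a simple starting term the top can vanish into the radical of the middle term, as in $0\to S_1\to P\to S_2\to 0$ above) and should be identified. Finally, Lemma~\ref{stequiv} is not the right tool here and is not needed: the comparison of tops under $F$ is immediate from the construction of $F$ for radical-square-zero algebras (the top of an $A$-module corresponds to the simple injective constituents of the top of its image), with no exactness statement required.
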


\begin{proof}
The simple $\L$-modules correspond via the functor $F$ to the simple injective $H$-modules. Hence, it is enough to show that the top of $\Sigma/F(N)$ contains all simple injective $H$-modules. By the definition of a slice, $\Sigma$ is a sincere $H$-module so it contains all simple $H$-modules as composition factors. In particular, it contains all simple injective $H$-modules as composition factors. But a simple injective module can only occur in the top of an indecomposable module, hence the top of $\Sigma$ contains all simple injective $H$-modules. Note that since $N$ is not a simple $\L$-module, the almost split sequence starting at $F(N)$ is:
\[0\to F(N) \extto{F(\zeta)} F(E) \extto{F(\varepsilon)} F(\tau^{-1}N) \to 0.\]
Now, since $F(N)$ is a source on $\Sigma$, $F(E)$ is a direct summand of $\Sigma$. Moreover, since $F(N)$ is not simple, the top of $F(N)$ is a direct summand of the top of $F(E)$ ~\cite{ARS}. Thus, all simple injective $H$-modules are contained in the top of $\Sigma/F(N)$. 
\end{proof}
Now, we assume that the above $M$ is an Auslander generator of $\L$ (we show later in this section that there are Auslander generators of this form) and ask when $M^{\star}$ is again an Auslander generator. The answer is given in the following theorem. We keep the same assumptions for $\L$ and $M$ as above.
\begin{thm}
Assume that $\gldim_{F_{M}}\L=1$.
\begin{enumerate} 
\item[(a)] If $\underline{\Hom}_\L(M_\P/N,N)\neq(0)$, then $\gldim_{F_{M^\star}}\L=1$.
\item[(b)] If $\underline{\Hom}_\L(M_\P/N,N)=(0)$, then $\gldim_{F_{M^\star}}\L=1$ if and only if $\Omega_\L(N)$ is in $\add M_\P$.
\end{enumerate}
\end{thm}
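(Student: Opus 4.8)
The plan is to apply Theorem \ref{mutation}, which tells us that $\gldim_{F_{M^\star}}\L=1$ if and only if $\Omega_{F_{M/N}}(N)$ lies in $\add(M\oplus M^\star)$. So the whole proof reduces to analysing the $F_{M/N}$-syzygy of $N$ itself, and the two cases in the statement should come from whether or not $\tau^{-1}N$ maps nontrivially to $N$ modulo $F_{M/N}$-projectives. Concretely, I would take a minimal right $\add M/N$-approximation $\alpha_N\colon M_N\to N$ and set $l$ equal to the length of the top of $\underline{\Hom}_{F_{M/N}}(\tau^{-1}N,N)$ as an $\underline{\End}_{F_{M/N}}(\tau^{-1}N)$-module, exactly as in Lemma \ref{syzygies}; then $\Omega_{F_{M/N}}(N)\simeq N^l\oplus \ker\alpha_N/N^l$ with $\ker\alpha_N/N^l$ free of $N$-summands.

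First I would treat case (a), where $\underline{\Hom}_\L(M_\P/N,N)\neq(0)$. The key point is that a nonzero stable map $M_\P/N\to N$ must survive in $\underline{\Hom}_{F_{M/N}}(\tau^{-1}N,N)$; one gets from $M_\P/N$ to $\tau^{-1}N$ because, by Lemma \ref{slicetop}, the top of $M_\P/N$ contains all simple $\L$-modules, hence in particular maps onto the projective cover-style data forcing a map through $E$ and then onto $\tau^{-1}N$ via $\varepsilon$ — more carefully, I would argue that the existence of a nonzero element of $\underline{\Hom}_\L(M_\P/N,N)$ together with the almost split sequence $0\to N\to E\to\tau^{-1}N\to 0$ forces $\underline{\Hom}_{F_{M/N}}(\tau^{-1}N,N)\neq 0$, so that $l\geq 1$. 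Then $\Omega_{F_{M/N}}(N)$ contains a copy of $N$, and since it also contains a copy of $E\subseteq \add M$ after one mutation step and the rest $\ker\alpha_N/N^l$ must in fact lie in $\add M$ because $\gldim_{F_M}\L=1$ gives $\Omega_{F_M}(N)\in\add M$ and one relates $\Omega_{F_{M/N}}(N)$ to $\Omega_{F_M}(N)$ via the extra summand $N$; so $\Omega_{F_{M/N}}(N)\in\add(M\oplus M^\star)$ automatically and Theorem \ref{mutation} applies.

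For case (b), where $\underline{\Hom}_\L(M_\P/N,N)=(0)$, the analogous vanishing should give $\underline{\Hom}_{F_{M/N}}(\tau^{-1}N,N)=(0)$, hence $l=0$, so $\Omega_{F_{M/N}}(N)=\ker\alpha_N$ and it contains no $N$-summand. Here $\alpha_N$ is a minimal right $\add M/N$-approximation of $N$ but, because $N$ itself is not in $\add M/N$ and no map from $\tau^{-1}N$ hits $N$ stably, one can upgrade $\alpha_N$ to understand $\ker\alpha_N$ via the ordinary syzygy: the point is that $M/N$ contains $\L$ (all projectives), so the relevant $F_{M/N}$-approximation of $N$ is built from a projective cover, and $\ker\alpha_N$ differs from $\Omega_\L(N)$ only by summands already known to be in $\add M/N$. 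Thus $\Omega_{F_{M/N}}(N)\in\add(M\oplus M^\star)$ if and only if $\Omega_\L(N)\in\add M_\P$, which via Theorem \ref{mutation} gives the stated equivalence. I expect the main obstacle to be the translation between the stable $\Hom$ vanishing/non-vanishing conditions $\underline{\Hom}_\L(M_\P/N,N)$ and the $F_{M/N}$-relative condition $\underline{\Hom}_{F_{M/N}}(\tau^{-1}N,N)$: making precise why a nonzero stable map out of $M_\P/N$ must come from, or produce, a stable relative map out of $\tau^{-1}N$ is where Lemma \ref{slicetop} and Lemma \ref{stequiv} (reducing computations to the hereditary algebra $H$ and its complete slice $\Sigma$) will have to be used carefully, and one must be sure the radical-cube-zero/selfinjective hypotheses are genuinely invoked there.
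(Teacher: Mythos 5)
Your opening move --- reduce everything via Theorem \ref{mutation} to deciding whether $\Omega_{F_{M/N}}(N)$ lies in $\add(M\oplus M^\star)$ --- is exactly the paper's, and your part (b) is essentially right in outline, though the justification should be the splitting argument rather than your detour through $l$: if $\underline{\Hom}_\L(M_\P/N,N)=(0)$ then the minimal right $\add M_\P/N$-approximation $\alpha\colon M_0\to N$ factors through the projective cover $p\colon P(N)\to N$, so the pullback of $p$ along $\alpha$ (which computes $\Omega_{F_{M/N}}(N)$) splits and $\Omega_{F_{M/N}}(N)\simeq M_0\oplus\Omega_\L(N)$, whence the stated equivalence; the implication you assert instead, namely $\underline{\Hom}_\L(M_\P/N,N)=(0)\Rightarrow\underline{\Hom}_{F_{M/N}}(\tau^{-1}N,N)=(0)$, is neither proved nor needed.

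The genuine gap is in part (a). Your key claim --- that $\ker\alpha_N/N^l$ lies in $\add M$ ``because $\gldim_{F_M}\L=1$ gives $\Omega_{F_M}(N)\in\add M$'' --- is empty: $N$ is itself $F_M$-projective, so $\pd_{F_M}N=0$ and $\Omega_{F_M}(N)=0$, and the hypothesis $\gldim_{F_M}\L=1$ gives no control whatsoever over the kernel of the $\add(M/N)$-approximation of $N$. This is precisely why Lemma \ref{CnotisotoN} excludes $C\simeq N$: the trick of factoring maps $N\to C$ through the left almost split map $\zeta$ breaks down when $C=N$. So Lemma \ref{syzygies} plus formal relative homological algebra cannot put $\Omega_{F_{M/N}}(N)$ into $\add(M\oplus M^\star)$ ``automatically''; if it could, the selfinjective, radical-cube-zero and complete-slice hypotheses would play no role in (a), whereas they carry the whole proof. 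The paper's argument computes $\Omega_{F_{M/N}}(N)$ as the pullback of $p\colon P(N)\to N$ along $\alpha\colon M_0\to N$; uses that $H$ is hereditary and $F(N)$ is a source on $\Sigma$ to get $\underline{\Hom}_A(M_\P/N,N)=(0)$, so $\alpha$ factors through $\Soc N$ and the pullback can be formed in $\mod A$ over $\rrad_\L P(N)\to\Soc N$ (here Lemma \ref{slicetop} gives surjectivity of $\alpha$); transports the diagram to $\mod H$ via Lemma \ref{stequiv}; and then shows every indecomposable summand $Z$ of $F(\Omega_{F_{M/N}}(N))$ admits a nonzero map from $F(M_1)$ and a nonzero map to $F(M_0)$, both on the slice, forcing $\Omega_{F_{M/N}}(N)\in\add M_\P$. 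The hypothesis $\underline{\Hom}_\L(M_\P/N,N)\neq(0)$ is not used to force $l\geq 1$ (your claimed translation to $\underline{\Hom}_{F_{M/N}}(\tau^{-1}N,N)\neq(0)$ is unsubstantiated and plays no role in the paper); it is used at the end to rule out degenerate summands: if the composition to $F(M_0)$ vanished for some $Z$, the sequence $0\to F(\Omega_\L(N))\to F(\Omega_{F_{M/N}}(N))\to F(M_0)\to 0$ would split, making $\alpha$ factor through $p$ and contradicting $\underline{\Hom}_\L(M_\P/N,N)\neq(0)$, and injective summands coming from $F(\rrad_\L P(N))$ are excluded similarly. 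None of this slice-theoretic core appears in your sketch, so (a) remains unproved.
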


\begin{proof}
Let $p\colon P(N)\to N$ be the projective cover of $N$ and $\alpha\colon M_0\to N$ the minimal right $\add M_\P/N$-approximation of $N$. Then $\Omega_{F_{M/N}}(N)$ can be computed using the following pullback diagram:

\[\xymatrix@C=25pt{0\ar[r]& \Omega_\L (N)\ar@{=}[d]\ar[r]&\Omega_{F_{M/N}}(N)\ar[d]^{\alpha '}\ar[r]^{p'}& M_0\ar[d]^{\alpha}\ar[r]&0\\
0\ar[r]&\Omega_\L (N)\ar[r]& P(N)\ar[r]^{p}&N\ar[r]& 0}
\]

(a) We show that $\Omega_{F_{M/N}}(N)$ is in $\add M$. Then the claim follows from Theorem \ref{mutation}. The first step is to show that we can actually compute $\Omega_{F_{M/N}}(N)$ using a pullback diagram of $A$-modules. This will allow us, using the stable equivalence induced by the functor $F$, to move to the hereditary algebra $H$ and prove the claim using the properties of the slice $\Sigma$. 

Since $N$ is not a simple or a projective $\L$-module,  its Loewy length is two, so $\rrad_\L N = \Soc N$ and we have the following commutative exact diagram: 

\[\xymatrix@C=20pt{&&0\ar[d]&0\ar[d]&\\
0\ar[r]& \Omega_\L (N)\ar@{=}[d]\ar[r]&\rrad_\L P(N)\ar@{^{(}->}[d]\ar[r]& \Soc N\ar@{^{(}->}[d]\ar[r]& 0\\
0\ar[r]& \Omega_\L (N)\ar[r] & P(N) \ar[d]\ar[r]& N \ar[d] \ar[r]& 0\\
&&P(N)/\rrad_\L P(N)\ar[d]\ar^-{\simeq}[r]& N/\Soc N\ar[d]& \\
&&0&0&}\]

\sloppy Since $H$ is hereditary and $F(N)$ is a source on the slice $\Sigma$,
we have that $\Hom_H (F(M_\P/N), F(N))=(0)$. Then  
\[\underline{\Hom}_A(M_\P/N,N)\simeq \underline{\Hom}_H
(F(M_\P/N),F(N)) = (0).\] 
So all morphisms from $M_\P/N$ to $N$ factor through a projective
$A$-module. Since $M_0$ is in $\add M_\P/N$, there exists a projective
$A$-module $Q$ and morphisms $\beta\colon M_0\to Q$ and $\gamma\colon
Q\to N$ such that $\gamma\comp\beta=\alpha$. Since $M_0$ does not
contain any nonzero projective summands we have $\beta (M_0)\subseteq
\rrad_A Q$. Then we have $\gamma\beta(M_0)\subseteq \gamma(\rrad_A
Q)\subseteq\rrad_A N$. Since $N$ is not a simple module we have that
$\rrad_A N = \Soc N$. So the morphism $\alpha$ factors through the
socle of $N$. 

Next observe that $\Omega_{F_{M/N}}(N)$ does not contain any nonzero
projective direct summands. We can see this by looking at the exact
sequence $0\to \Omega_{F_{M/N}}(N)\to P(N)\oplus M_0\extto{(p
  \ \alpha)} N \to 0$. If $\Omega_{F_{M/N}}(N)$ contained a projective
summand $P$, then $P$ would also be injective, hence the monomorphism
$\Omega_{F_{M/N}}(N)\to P(N)\oplus M_0$ would have some split part. In
particular $P$ would be a direct summand of $P(N)$, since $M_0$ is in
$\add M_\P$, which is a contradiction since $P(N)$ is the projective
cover of $N$.  Now, combining these observations with the diagrams
above, we get the following commutative exact diagram: 

\[\xymatrix@C=7pt{0\ar[rr]&&\Omega_\L (N)\ar@{=}[dd]\ar@{=}[dr]\ar[rr]&&\Omega_{F_{M/N}}(N)\ar[dd]\ar[dr]^{\alpha '}\ar[rr]^{p'}&&M_0\ar[dd]\ar[dr]^{\alpha}\ar[rr]&&0&\\
&0\ar[rr]&&\Omega_\L (N)\ar@{=}[dl]\ar[rr]&&\rrad_\L P(N)\ar@{^{(}->}[dl]\ar[rr]^{p|_{\rrad_\L P(N)}}&&\Soc N\ar@{^{(}->}[dl]\ar[rr]&&0\\
0\ar[rr]&&\Omega_\L (N)\ar[rr]&&P(N)\ar[rr]^{p}&&N\ar[rr]&&0&
}\]
Hence we can compute $\Omega_{F_{M/N}}(N)$ using the upper pullback
diagram which is in $\mod A$. By Lemma \ref{slicetop} the morphism
$\alpha$ is an epimorphism, so $\alpha '$ is an epimorphism too. Set
$M_1=\Ker\alpha$. Since $\Soc N$ is not isomorphic to $N$, by Lemma
\ref{CnotisotoN} we get that $M_1$ is in $\add M$. In particular $M_1$
is in $\add M_\P$, since $\alpha$ is minimal (here we use the same
argument as the one we used in order to show that
$\Omega_{F_{M/N}}(N)$ does not contain any nonzero projective
summand). We set for simplicity $p|_{\rrad_\L P(N)}=q$. We have the
following commutative exact diagram of $A$-modules: 

\[\xymatrix@C=20pt{&&0\ar[d]&0\ar[d]&\\
&& M_1\ar[d]^{i}\ar@{=}[r]&M_1\ar[d]&\\
0\ar[r]&\Omega_\L (N)\ar@{=}[d]\ar[r]&\Omega_{F_{M/N}}(N)\ar[d]^{\alpha '}\ar[r]^-{p'}& M_0 \ar[d]^{\alpha}\ar[r]&0\\
0\ar[r]&\Omega_\L (N)\ar[r]&\rrad_\L P(N)\ar[r]^{q}\ar[d]&\Soc N\ar[r]\ar[d]&0\\
&&0&0&}
\] 

The next step is to show that if we apply the functor $F$ to the above
diagram, the resulting commutative diagram will also be exact. To do
this we use Lemma \ref{stequiv}. Note that since $N$ is an
indecomposable not projective $\L$-module, $\Omega_\L (N)$ is also
indecomposable. Moreover $\Omega_\L (N)$ is not a simple $A$-module
(the only $A$-modules $N$ such that $\Omega_\L (N)$ is simple are the
indecomposable projective $A$-modules). Hence, applying Lemma
\ref{stequiv} it is not hard to see that the commutative diagram 

\[\xymatrix@C=20pt{&&0\ar[d]&0\ar[d]&\\
&& F(M_1)\ar[d]^{F(i)}\ar@{=}[r]&F(M_1)\ar[d]&\\
0\ar[r]&F(\Omega_\L (N))\ar@{=}[d]\ar[r]&F(\Omega_{F_{M/N}}(N))\ar[d]^{F(\alpha ')}\ar[r]^-{F(p')}& F(M_0) \ar[d]^{F(\alpha)}\ar[r]&0\\
0\ar[r]&F(\Omega_\L (N))\ar[r]&F(\rrad_\L P(N))\ar[r]^{F(q)}\ar[d]&F(\Soc N)\ar[r]\ar[d]&0\\
&&0&0&}
\] 
is again exact.

The last step is to show that for any indecomposable direct summand $Z$ of $\Omega_{F_{M/N}}(N)$, the compositions 
\[\xymatrix@C=20pt{F(M_1)\ar@{^{(}->}[r]^-{F(i)}& F(\Omega_{F_{M/N}}(N))\ar@{->>}[r]^-{p_Z}& Z\\}\]
and
\[\xymatrix@C=20pt{Z\ar@{^{(}->}[r]^-{i_Z}& F(\Omega_{F_{M/N}}(N))\ar@{->>}[r]^-{F(p')}& F(M_0),\\}\]
where $p_Z$ is the natural projection and $i_Z$ the natural inclusion, are non zero. Since $M_0$ and $M_1$ are in $\add M_\P$ and $F(M_\P)$ is a complete slice, this would mean that $F(\Omega_{F_{M/N}}(N))$ is also in $\add F(M_\P)$. 

So let $Z$ be an indecomposable summand of $\Omega_{F_{M/N}}(N)$, and assume first that the composition $F(p')\comp i_Z$, as above, is zero.  Then, $Z$ is isomorphic to a direct summand of $F(\Omega_\L (N))$. But since $\Omega_\L (N)$ is indecomposable, $F(\Omega_\L (N))$ is also indecomposable (property of the functor $F$), so $Z$ is isomorphic to $F(\Omega_\L (N))$, and the sequence 
\[0\to F(\Omega_\L(N))\to F(\Omega_{F_{M/N}}(N))\extto{F(p')} F(M_0)\]
in the above diagram is split exact. Then $F(\alpha)$ factors through $F(p)$ and consequently $\alpha\colon M_0\to N$ factors through $p\colon P(N)\to N$. But then we have $\underline{\Hom}_\L(M_\P/N,N)=(0)$, which is a contradiction.  

Assume now that the composition $p_Z\comp F(i)$, with $p_Z$ and $F(i)$ as above, is zero. Then, $Z$ is isomorphic to a direct summand of $F(\rrad_\L P(N))$. Note that $F(\rrad_\L P(N))$ is an injective $H$-module, hence $Z$ is an indecomposable injective $H$-module. But then the composition $F(p')\comp i_Z$ is zero, since by assumption $F(M_0)$ does not contain any injective direct summand. As we have already seen, this leads to a contradiction. Hence, for any indecomposable direct summand $Z$ of $F(\Omega_{F_{M/N}}(N))$, both of the compositions $F(p')\comp i_Z$ and $p_Z\comp F(i)$ are nonzero which implies that $F(\Omega_{F_{M/N}}(N))$ is also in $\add F(M_\P)$.

\bigskip

(b) If $\underline{\Hom}_\L(M_\P/N,N)=(0)$, then $\Omega_{F_{M/N}}(N)\simeq M_0\oplus \Omega_\L(N)$. Hence, in view of Theorem \ref{mutation} we have that  $\gldim_{F_{M^\star}}\L=1$ if and only if  $\Omega_\L(N)$ is in $\add M_\P$.
\end{proof}

We apply the above theorem to Auslander generators. Recall from the beginning of the section that $\L$ is a selfinjective algebra of infinite type and $H$ is the hereditary algebra which is stably equivalent to $\L/\Soc\L$ via the functor $F$.

\begin{cor}\label{M^0}
Let $M=\L\oplus F^{-1}(\Sigma)$ be an Auslander generator of $\L$ with $\Sigma$ a complete slice of the AR-quiver of $H$, that does not contain any projective or injective $H$-modules. Then $M^{\star}$ is an Auslander generator if and only if  $\underline{\Hom}_\L(M_\P/N,N)\neq(0)$ or $\Omega_\L(N)$ is in $\add M_\P$.
\end{cor}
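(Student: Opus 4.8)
The plan is to reduce the assertion about Auslander generators to the statement about $F_{M^\star}$-global dimension handled by the preceding theorem, and then to merge its two cases by a purely logical manipulation. First I would record that $M^\star=M/N\oplus\tau^{-1}N$ is again a generator-cogenerator: since $\L$ is selfinjective, the indecomposable injective modules coincide with the indecomposable projective ones, and passing from $M$ to $M/N$ only deletes the summand $N$, which is neither projective nor injective; hence $M/N$, and a fortiori $M^\star$, still contains all indecomposable projective-injective $\L$-modules. Because $\repdim\L=3$ for these algebras (as recalled at the start of the section), this gives $\gldim\End_\L(M^\star)\ge 3$, while the infinite representation type of $\L$ forces $\gldim_{F_{M^\star}}\L\ge 1$. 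Applying Proposition \ref{gldim} with $l=1$ in both directions, I conclude that $M^\star$ is an Auslander generator, i.e. $\gldim\End_\L(M^\star)=3$, if and only if $\gldim_{F_{M^\star}}\L=1$.

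Next I would check that the hypotheses of the theorem are in force. Since $M$ is assumed to be an Auslander generator, the same use of Proposition \ref{gldim} yields $\gldim_{F_M}\L=1$. By the running setup of this section (the paragraph before Lemma \ref{slicetop}), $N$ is an indecomposable summand of $M_\P$ with $F(N)$ a source on the complete slice $\Sigma$, so the middle term $E$ of the almost split sequence starting at $N$ lies in $\add M_\P\subseteq\add M$; thus all the standing assumptions of the theorem are satisfied. I would then simply invoke the theorem: if $\underline{\Hom}_\L(M_\P/N,N)\neq(0)$, part (a) gives $\gldim_{F_{M^\star}}\L=1$, so $M^\star$ is an Auslander generator; if $\underline{\Hom}_\L(M_\P/N,N)=(0)$, part (b) gives that $\gldim_{F_{M^\star}}\L=1$ precisely when $\Omega_\L(N)$ is in $\add M_\P$.

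Finally I would combine the two cases. Writing $P$ for the statement ``$\underline{\Hom}_\L(M_\P/N,N)\neq(0)$'' and $Q$ for ``$\Omega_\L(N)\in\add M_\P$'', the theorem together with the first paragraph says that $M^\star$ is an Auslander generator if and only if $P\vee(\neg P\wedge Q)$ holds, and $P\vee(\neg P\wedge Q)$ is equivalent to $P\vee Q$, which is exactly the stated criterion. I expect no genuine obstacle here: the corollary is a bookkeeping consequence of the theorem and Proposition \ref{gldim}; the only points needing a line of justification are that $M^\star$ is a generator-cogenerator and that $\gldim_{F_{M^\star}}\L$ cannot be $0$, the latter again because $\L$ has infinite representation type.
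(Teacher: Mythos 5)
Your proposal is correct and follows the same route the paper intends: the paper states the corollary as an immediate consequence of the preceding theorem, and your argument just makes explicit the standard translation via Proposition \ref{gldim} (Auslander generator $\Leftrightarrow$ $\gldim_{F_{M^\star}}\L=1$, using that $M^\star$ is still a generator-cogenerator and that infinite representation type rules out relative global dimension $0$), the verification of the theorem's hypotheses from the section's running setup, and the trivial case merge $P\vee(\neg P\wedge Q)\equiv P\vee Q$. No gaps.
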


\section{Applications and examples}

Let $\L$ be a selfinjective algebra with radical cube zero of infinite representation type. We keep the same assumptions and notation as in the previous section.
A natural question to ask, is whether there exists any Auslander generator $M=\L\oplus M_\P$, with $M_\P$ as before. The answer is that, for these particular algebras, there are infinitely many Auslander generators of this form. Set $$M^0=\L\oplus M_\P^0,$$ where $$M_\P^0=\L/\Soc\L\oplus\L/\Soc^2\L.$$ Then, we know by ~\cite{A}, that $M^0$ is an Auslander generator for $\L$.  Then, the  $\L$-module $$M^i=\L\oplus(\tau^{-1})^i M_\P^0$$ is an Auslander generator by Corollary \ref{DTr}. Moreover, for any $i\geq 1$, we have $(\tau^{-1})^i M_\P^0=F^{-1}(\Sigma_i)$ for some complete slice $\Sigma_i$ in the AR-quiver of $H$ that does not contain any projective or injective $H$-modules.

Starting with  $M^0$, we can construct more Auslander generators by exchanging all simple $\L$-modules, one by one, thus getting a new Auslander generator each time. Of course, we can not apply our result directly to $M^0$, but we can translate $M^0_\P$   by applying the functor $\tau^{-1}$, as we saw above, do the exchange there, and translate the resulting slice back using  $\tau$. Let $M_1$ be the Auslander generator that we have constructed by exchanging all simple $\L$-modules. Since for selfinjective algebras we have $\tau \simeq \Omega ^2\mathcal N$, where $\mathcal N$ is the Nakayama automorphism, we get that $\Omega_\L(M_{1\P})=M^0_\P$. Since 
$\Omega_\L$ induces a stable equivalence on $\mod\L$,  it is not hard to see that we can again exchange all its sources, one by one, thus getting a new Auslander generator each time. Continuing in this way, we can construct by mutation, an infinite set of Auslander generators, which we denote by $\mathcal M$. Note that $\mathcal M$ contains all $M^i=\L\oplus(\tau^{-1})^i M_\P^0$, for $i \geq 0$. 

Moreover, starting with the Auslander generator $$L^0=\L\oplus L_\P^0$$ where $$L_\P^0=\rrad_\L\oplus \rrad^2_\L,$$ the dual construction will give us an infinite set of Auslander generators, which we denote by $\mathcal L$, that contains all $L^i=\L\oplus\tau^i L_\P^0$, for $i\geq 0$. It is shown in  ~\cite{L2} that in the case of the exterior algebra in two variables, the union $\mathcal L\cup \mathcal M$ gives a complete set of all non-isomorphic minimal Auslander generators.

Next, we illustrate the above process with an example.

\begin{example}
Let $Q$ be the quiver 
\[\xymatrix@C=20pt{1\ar@(ul,dl)_c\ar@<0.4ex>[r]^{a}& 2 \ar@<0.4ex>[l]^{b}\ar@(ur,dr)^d}\]
and let $kQ$ be the path algebra of $Q$ over some algebraically closed
field $k$. Set $\L=kQ/I$, where $I$ is the ideal of $kQ$ generated by
$\{c^2-ab,ca,ad,bc,db,d^2-ba\}$. We denote by $P_i$ the indecomposable
projective $\L$-module that corresponds to the vertex $i$, and by
$S_i$, the top of $P_i$, for $i=1,2$. We draw the component of the
AR-quiver of $\L$, that contains the indecomposable projective
$\L$-modules.  
\[\xymatrix@C=0.6cm@R=0.5cm{&&&P_1\ar[dr]&&&\\
\makebox[0.6cm]{$\cdots$}\tau\rrad_\L P_1\ar[r]\ar[dr]&\tau S_1\ar[r]\ar[dr]&\rrad_\L P_1\ar[r]\ar[ur]\ar[dr]&S_1\ar[r]\ar[dr]&\frac{P_1}{\Soc P_1}\ar[r]\ar[dr]&\tau^{-1}S_1\ar[r]\ar[dr]&\tau^{-1}\frac{P_1}{\Soc P_1}\makebox[0.6cm]{$\cdots$}\\
\makebox[0.6cm]{$\cdots$}\tau\rrad_\L P_2\ar[r]\ar[ur]&\tau S_2\ar[r]\ar[ur]&\rrad_\L P_2\ar[r]\ar[dr]\ar[ur]&S_2\ar[r]\ar[ur]&\frac{P_2}{\Soc P_2}\ar[r]\ar[ur]&\tau^{-1}S_1\ar[r]\ar[ur]&\tau^{-1}\frac{P_2}{\Soc P_2}\makebox[0.6cm]{$\cdots$}\\
&&&P_2\ar[ur]&&&
}\]
We begin with the Auslander generator 
\[M^0=P_1\oplus P_2\oplus P_1/\Soc P_1\oplus P_2/\Soc P_2\oplus S_1\oplus S_2.\]
Although $M^0_\P$ can not be obtained from a complete slice of the hereditary algebra that is stable equivalent to $\L/\Soc \L$, 
we can still replace $S_1$ by $\tau^{-1}S_1$ using Corollary \ref{M^0} as follows. We consider the $\L$-module
\begin{multline}M^1=P_1\oplus P_2\oplus \tau^{-1}(P_1/\Soc P_1\oplus P_2/\Soc P_2\oplus S_1\oplus S_2)\notag \\ = P_1\oplus P_2\oplus \tau^{-1}(P_1/\Soc P_1)\oplus \tau^{-1}(P_2/\Soc P_2)\oplus \tau^{-1}S_1\oplus \tau^{-1}S_2\end{multline}
which is again an Auslander generator by Corollary \ref{DTr}. We now apply Corollary \ref{M^0} for $M=M^1$ and $N=\tau^{-1}S_1$. Since $\tau$ induces a stable equivalence on $\mod \L$, we have 
\[\underline{\Hom}_\L(M^1_\P, \tau^{-1}S_1)\simeq \underline{\Hom}_\L(M^0_\P, S_1),\] 
which is nonzero, since the morphism $P_1/\Soc P_1\to S_1$, induced by the projective cover of $S_1$, does not factor through a projective $\L$-module. Hence, the module ${M^1}^{\star}=M^1/\tau^{-1}S_1\oplus \tau^{-2} S_1$ is an Auslander generator. Then, by Corollary \ref{DTr}, we conclude that ${M^0}^{\star}=M^0/S_1\oplus \tau^{-1} S_1$ is an Auslander generator. Now, using exactly the same arguments, we can replace the direct summand $S_2$ of ${M^0}^{\star}$ by $\tau^{-1} S_2$, thus getting the Auslander generator
\[M_1=P_1\oplus P_2\oplus P_1/\Soc P_1\oplus P_2/\Soc P_2\oplus \tau^{-1}S_1\oplus \tau^{-1}S_2.\]
Next, in order to replace the direct summand $P_1/\Soc P_1$ of $M_1$, by $\tau^{-1}(P_1/\Soc P_1)$, and since we can not apply Corollary \ref{M^0} directly to $M_1$, consider the Auslander generator
\begin{multline}M_1^1=P_1\oplus P_2\oplus \tau^{-1}(P_1/\Soc P_1\oplus P_2/\Soc P_2\oplus \tau^{-1}S_1\oplus \tau^{-1}S_2)\notag\\
= P_1\oplus P_2 \oplus  \tau^{-1}(P_1/\Soc P_1)\oplus \tau^{-1}(P_2/\Soc P_2)\oplus \tau^{-2}S_1\oplus \tau^{-2}S_2.\end{multline}
We apply Corollary \ref{M^0} for $M=M_1^1$ and $N=\tau^{-1}(P_1/\Soc P_1)$. Since $\tau$ and $\Omega_\L$ induce a stable equivalence on $\mod\L$, we have  
\begin{multline}\underline{\Hom}_\L({M_1^1}_\P, \tau^{-1}(P_1/\Soc P_1))\simeq \underline{\Hom}_\L({M_1}_\P, P_1/\Soc P_1)\notag \\\simeq \underline{\Hom}_\L(\Omega_\L^{-1}({M_1}_\P), \Omega_\L^{-1}\tau^{-1}(P_1/\Soc P_1))\simeq \underline{\Hom}_\L(M^0_\P, S_1),\end{multline}
which, as we have already seen is nonzero. Hence the module ${M_1^1}^\star=M_1^1/\tau^{-1}(P_1/\Soc P_1)\oplus \tau^{-1}(P_1/\Soc P_1)$ is an Auslander generator which means that $M_1^\star=M_1/(P_1/\Soc P_1)\oplus P_1/\Soc P_1$ is an Auslander generator too. Using the same arguments we can replace the direct summand $P_2/\Soc P_2$ of $M_1^\star$ by $\tau^{-1}(P_2/\Soc P_2)$, thus getting the Auslander generator
\[M_2=P_1\oplus P_2\oplus \tau^{-1}(P_1/\Soc P_1)\oplus \tau^{-1}(P_2/\Soc P_2)\oplus \tau^{-1}S_1\oplus \tau^{-1}S_2.\]
Note that $M_2=M^1$, and that from now on if we continue this process, the nonprojective part of the Auslander generators that we construct are translates of the nonprojective part of those that we have already constructed. So in this example the set $\mathcal M$ consists of the $\L$-modules $M^0, {M^0}^\star, M_1, {M_1}^\star$ and all the modules that are obtained by applying $\tau^i$, for all $i\geq 1$, to the nonprojective part of these modules.

The set $\mathcal L$ is constructed dually, starting from the Auslander generator 
\[L^0=P_1\oplus P_2\oplus \rrad_\L P_1\oplus \rrad_\L P_2 \oplus S_1\oplus S_2.\]
\end{example}

\bigskip 
\bigskip

In general, the mutation process we just described, does not give us all Auslander generators of the form $M=\L\oplus M_\P$, with $M_\P$ coming from a complete slice. The following example, gives an infinite set of nonisomorphic Auslander generators of the form  $M=\L\oplus M_\P$, that can not be obtained in the above way.

\begin{example}
Let $Q$ be the quiver 
\[\xymatrix@C=20pt{1\ar@(ul,dl)_c\ar@<0.4ex>[r]^{a_1}& 2 \ar@<0.4ex>[r]^{a_2}\ar@<0.4ex>[l]^{b_2}& 3\ar@<0.4ex>[l]^{b_3} \ar@<0.4ex>[r]^{a_3}& 4 \ar@<0.4ex>[l]^{b_4}\ar@(ur,dr)^d}\]
and let $kQ$ be the path algebra of $Q$ over some algebraically closed
field $k$. Set $\L=kQ/I$, where $I$ is the ideal of $kQ$ generated by
\[\{c^2-a_1b_2,ca_1,b_2c,a_1a_2,a_2a_3,b_3b_2,b_4b_3,a_2b_3-b_2a_1,a_3b_4-b_3a_2, 
a_3d,db_4,d^2-b_4a_3\}.\] 
We denote by $P_i$ the indecomposable projective $\L$-module that
corresponds to the vertex $i$, and by $S_i$, the top of $P_i$, for
$i=1,2,3,4$. We begin with the Auslander generator:   
\[\L\oplus S_1\oplus S_2 \oplus S_3\oplus S_4\oplus P_1/\Soc P_1\oplus P_2/\Soc  P_2\oplus P_3/\Soc P_3\oplus P_4/\Soc P_4.\]
Exchanging first $S_1$ and then $S_2$ with $\tau^{-1}S_1$ and $\tau^{-1}S_2$ respectively, we get the Auslander generator 
\begin{multline}M=\L\oplus \tau^{-1}S_1\oplus \tau^{-1}S_2 \oplus S_3\oplus S_4\oplus P_1/\Soc P_1\oplus P_2/\Soc P_2\notag\\ \oplus P_3/\Soc P_3\oplus P_4/\Soc P_4.\end{multline}
\sloppy Observe that the $\L$-module $P_1/\Soc P_1$ has now become a source. Hence, instead of exchanging the simple module $S_3$, we can try to exchange the module $P_1/\Soc P_1$. Consider the Auslander generator $\L\oplus \tau^{-1} M_\P$, where $M_\P$ is the non-projective part of $M$. Then $\tau^{-1} M_\P=F^{-1}(\Sigma)$, for some complete slice $\Sigma$ that projective or injective $H$-modules. Set $N=\tau^{-1} (P_1/\Soc P_1)$. Then $F(N)$ is a source on $\Sigma$. Moreover $\underline{\Hom}_\L(\tau^{-1} M_\P/N, N)\neq (0)$, so by Corollary \ref{M^0} we have that  $\L\oplus \tau^{-1} M_\P/\tau^{-1}(P_1/\Soc P_1) \oplus (\tau^{-1})^2 (P_1/\Soc P_1)$ is an Auslander generator. Then, translating back the nonprojective part, we get that 
\begin{multline}M^{\star}=\L \oplus \tau^{-1}S_1\oplus \tau^{-1}S_2 \oplus S_3\oplus S_4\oplus\tau^{-1} (P_1/\Soc P_1) \oplus P_2/\Soc P_2\notag \\ \oplus P_3/\Soc P_3\oplus P_4/\Soc P_4
\end{multline}
is an Auslander generator. Furthermore, for any integer $i$ the $\L$-modules $\L\oplus (\tau^{-1})^i (M^{\star}_\P)$ are Auslander generators. Thus, we have constructed an infinite set of Auslander generators of the form $M=\L\oplus M_\P$, that can not be obtained by the mutation process described above.
\end{example}

\bigskip

The next natural question to ask, is whether all the $\L$-modules $M$ of the form $M=\L\oplus M_\P$ are Auslander generators. The answer is negative as the following example shows.

\begin{example}
Let $Q$ be the quiver 
\[\xymatrix@C=20pt{1\ar@(ul,dl)_c\ar@<0.4ex>[r]^{a_1}& 2 \ar@<0.4ex>[r]^{a_2}\ar@<0.4ex>[l]^{b_2}& 3\ar@<0.4ex>[l]^{b_3} \ar@<0.4ex>[r]^{a_3}&4\ar@<0.4ex>[l]^{b_4} \ar@<0.4ex>[r]^{a_4}& 5 \ar@<0.4ex>[l]^{b_5}\ar@(ur,dr)^d}\]
and let $kQ$ be the path algebra of $Q$ over some algebraically closed
field $k$. Set $\L=kQ/I$, where $I$ is the ideal of $kQ$ generated by
\begin{multline}
\{c^2-a_1b_2,ca_1,b_2c,a_1a_2,a_2a_3,a_3a_4,b_3b_2,b_4b_3,b_5b_4,a_2b_3-b_2a_1,
\notag\\ 
a_3b_4-b_3a_2,a_4b_5-b_4a_3,a_4d,db_5,d^2-b_5a_4\}.  
\end{multline}
We denote by $P_i$ the indecomposable projective $\L$-module that
corresponds to the vertex $i$, and by $S_i$, the top of $P_i$, for
$i=1,2,3,4,5$.  
 Starting with the Auslander generator $$M^0=\L\oplus \L/\Soc\L \oplus \L/\Soc^2\L$$ and iterated applications of Corollary \ref{M^0}, we can construct the Auslander generator:
\begin{multline}M=\L\oplus \tau^{-1}S_1\oplus \tau^{-2}S_2\oplus S_3 \oplus \tau^{-2}S_4\oplus\tau^{-1}S_5\oplus \tau^{-1}(P_1/\Soc P_1)\oplus P_2/\Soc P_2 \notag\\ \oplus \tau^{-1}(P_3/\Soc P_3)\oplus P_4/\Soc P_4\oplus \tau^{-1} (P_5/\Soc P_5).\end{multline}
Consider the Auslander generator $\L\oplus \tau^{-1}M_\P$, where $M_\P$ is the non-projective part of $M$.  Then $\tau^{-1} M_\P=F^{-1}(\Sigma)$, for some complete slice $\Sigma$ that does not contain any projective or injective $H$-modules. Set $N=\tau^{-2}(P_3/\Soc P_3)$. Then $F(N)$ is a source on $\Sigma$. But now, it is not hard to see that $\underline{\Hom}_\L(\tau^{-1} M_\P/N,N)=(0)$ and $\Omega_\L(N)$ is not in $\add (\L\oplus \tau^{-1} M_\P)$ hence, by Corollary \ref{M^0}, the $\L$-module $\L\oplus \tau^{-1} M_\P/N \oplus \tau^{-1} N$, is not an Auslander generator. Hence, the module 
\begin{multline}M^{\star}=\L\oplus \tau^{-1}S_1\oplus \tau^{-2}S_2\oplus S_3 \oplus \tau^{-2}S_4\oplus\tau^{-1}S_5\oplus \tau^{-1}(P_1/\Soc P_1)\oplus P_2/\Soc P_2 \notag\\ \oplus \tau^{-2}(P_3/\Soc P_3)\oplus P_4/\Soc P_4\oplus \tau^{-1} (P_5/\Soc P_5)
\end{multline}
is not an Auslander generator. Furthermore, none of the modules $\L\oplus (\tau^{-1})^i(M^{\star}_\P)$, is an Auslander generator, for any integer $i$. 

\end{example}

To sum up, a complete slice $\Sigma$, in the AR-quiver of $H$, that induces an Auslander generator for the algebra $\L$, can be mutated by replacing a source $N$ of $\Sigma$, by $\tau^{-1}N$ and getting another Auslander generator.  But although there is always some source that you can replace and get again an Auslander generator, not all sources have this property. Note that all of our examples are {\it weakly symmetric} algebras with radical cube zero. These algebras  were classified by Benson in ~\cite{B}. It would be interesting, at least for the tame case, to investigate which mutations of a slice  give an Auslander generator.

\end{document}